\newtheorem{theorem}{Theorem}[section]
\newtheorem{thm}[theorem]{Theorem}
\newtheorem{lem}[theorem]{Lemma}
\newtheorem{proposition}[theorem]{Proposition}
\newtheorem{prop}[theorem]{Proposition}
\newtheorem{corollary}[theorem]{Corollary}
\theoremstyle{definition}
\newtheorem{defn}[theorem]{Definition}
\theoremstyle{remark}
\newtheorem{rem}[theorem]{Remark}
\numberwithin{equation}{section}
 \DeclareMathAlphabet{\mathpzc}{OT1}{pzc}{m}{it}
 \newcommand{\To}{\longrightarrow}
 \newcommand{\E}{\mathbb{E}}            
 \newcommand{\T}{\mathbb{T}}
 \newcommand{\e}{\varepsilon}
 \newcommand{\p}{\partial}
 \newcommand{\Ll}{\langle}
 \newcommand{\Rr}{\rangle}
 \newcommand{\N}{\mathbb{N}}
 \newcommand{\R}{\mathbb{R}}
 \newcommand{\Z}{\mathbb{Z}}
 \newcommand{\PP}{\mathbb{P}}
 \newcommand{\mcl}{\mathcal}
 \newcommand{\Be}{\begin{equation}}
 \newcommand{\Ee}{\end{equation}}
 \newcommand{\Bs}{\begin{split}}
 \newcommand{\Es}{\end{split}}
  \newcommand{\Bes}{\begin{equation*}}
 \newcommand{\Ees}{\end{equation*}}
 \newcommand{\BT}{\begin{thm}}
 \newcommand{\ET}{\end{thm}}
 \newcommand{\Bp}{\begin{proof}}
 \newcommand{\Ep}{\end{proof}}
 \newcommand{\BL}{\begin{lem}}
 \newcommand{\EL}{\end{lem}}
 \newcommand{\BP}{\begin{proposition}}
 \newcommand{\EP}{\end{proposition}}
 \newcommand{\BC}{\begin{corollary}}
 \newcommand{\EC}{\end{corollary}}
 \newcommand{\BR}{\begin{rem}}
 \newcommand{\ER}{\end{rem}}
 \newcommand{\BD}{\begin{defn}}
 \newcommand{\ED}{\end{defn}}
 \newcommand{\BI}{\begin{itemize}}
 \newcommand{\EI}{\end{itemize}}
 \newcommand{\eqn}{equation}
 \newcommand{\tl}{\tilde}
 \newcommand{\lf}{\left}
 \newcommand{\rg}{\right}
  \newcommand{\lt}{\left}
 \newcommand{\rt}{\right}
\begin{document}
\title
[Ergodicity of stochastic Ginzburg-Landau equation with $\alpha$-stable noises]{Ergodicity of stochastic real Ginzburg-Landau equation driven by $\alpha$-stable noises}

\author[L. Xu]{Lihu Xu}
\address{Department of Mathematics, Brunel University,
Kingston Lane,
Uxbridge,
Middlesex UB8 3PH, United Kingdom}
\email{Lihu.Xu@brunel.ac.uk}

\maketitle

\begin{abstract} \label{abstract}
We study the ergodicity of stochastic real Ginzburg-Landau equation driven by additive $\alpha$-stable noises,
showing that as $\alpha \in (3/2,2)$, this stochastic system admits a unique invariant measure.
After establishing the existence of invariant measures by the same method as in
\cite{DXZ11},
we prove that the system is strong Feller and accessible to zero.
These two properties imply the ergodicity
by a simple but useful criterion in \cite{Hai09}.
To establish the strong Feller property, we need to truncate the nonlinearity and apply a gradient estimate established in
\cite{PZ11} (or see \cite{PSXZ11} for a general version for the finite dimension systems). Because
the solution has discontinuous trajectories and the nonlinearity is not Lipschitz,
we can not solve a control problem to get irreducibility. Alternatively,
we use a replacement, i.e., the fact that the system is accessible to zero.
In section \ref{s:ConSta},
we establish a maximal inequality for stochastic
$\alpha$-stable convolution, which is crucial for studying
the well-posedness, strong Feller property and the accessibility of the mild
solution. We hope this inequality will also be useful for studying other
SPDEs forced by $\alpha$-stable noises.
 \\

\noindent {\bf Keywords}: Stochastic real Ginzburg-Landau equation driven by $\alpha$-stable noises, Galerkin approximation, Strong Feller property, Ergodicity, Accessibility, Stochastic $\alpha$-stable convolution, Maximal inequality. \\
{\bf Mathematics Subject Classification (2000)}: {60H15, 47D07,  60J75,  35R60}. \\
\end{abstract}

\section{Introduction}
We shall study the ergodicity of
stochatic real Ginzburg-Landau equation driven by $\alpha$-stable noises
on torus $\T=\R/\Z$ as follows:
\Be \label{e1.1}
d X-\p^2_\xi Xdt-(X-X^3)dt=dL_t
\Ee
where $X:[0, \infty): \R^{+} \times \T \rightarrow \R$ and $L_t$ is some cylindrical $\alpha$-stable noises.
The more details about Eq. \eqref{e1.1} will be given in the next section. \\

For the study of invariant measures
and the long-time behaviour of stochastic systems driven by $\alpha$-stable type noises,
there seem
only several results (cf.~\cite{XZ09, XZ10, PXZ10, PSXZ11, Ku09, X11}).
\cite{XZ09, XZ10} studied the exponential mixing of stochastic spin systems with
white $\alpha$-stable noises, while \cite{PXZ10, PSXZ11} obtained exponential mixing
for a family of semi-linear SPDEs with Lipschitz nonlinearity. \cite{Ku09} obtained a nice
criterion for the exponential mixing of a family of SDEs forced by L\'evy noises, it covers 1D SDEs driven by
$\alpha$-stable noises. \cite{X11} proved the exponential mixing for a family of 2D SDEs forced by
degenerate $\alpha$-stable noises with $0<\alpha<2$. \cite{DXZ11} obtained the existence of invariant measures
for 2D stochastic Navier-Stokes
equations forced by $\alpha$-stable noises with $\alpha \in (1,2)$. \\

In this paper, we shall study the ergodicity of stochastic real Ginzburg-Landau equation driven by additive $\alpha$-stable noises,
showing that as $\alpha \in (3/2,2)$, the system \eqref{e1.1} admits a unique invariant measure.
After establishing the existence of invariant measures by the same method as in
\cite{DXZ11},
we prove that the system is strong Feller and accessible to zero. These two properties imply the ergodicity
by a simple but useful criterion in \cite{Hai09}.
To establish the strong Feller property, we need to truncate the nonlinearity and apply a gradient estimate established in
\cite{PZ11} (or see \cite{PSXZ11} for a general version for the finite dimension systems). Due to
the non-Lipschitz nonlinearity and the discontinuous trajectories, unlike the case of SPDEs forced by
Wiener noises,
we can not solve a control problem to get irreducibility. Alternatively,
we use a replacement, i.e., the fact that the system is accessible to zero.
 \\

SPDEs with L\'evy noises have been intensively studied in recent years (\cite{AWZ98,AMR09,Ok08,MaRo09,PZ11,XZ09,WX10,TW05,PeZa07,DX07, Do08, DXi09}), but most of them assume that the noises are square integrable. This
restriction rules out the interesting $\alpha$-stable noises. The loss of the second moment of $\alpha$-stable noises makes
many nice analysis tools, such as Burkholder-Davis-Gundy inequality and Da Prato-Kwapie\'n-Zabczyk's factorization technique (\cite{DPZ92}), not available. Consequently, some important estimates such as maximal inequality of stochastic convolution can not be established as in Wiener noises case. Section \ref{s:ConSta} establishes this inequality using an integration by parts technique other than Da Prato-Kwapie\'n-Zabczyk's factorization technique,
which have their own interests\footnote{This section includes part of the author's not published results in \cite{Xu12}.}. This maximal inequality of stochastic $\alpha$-stable convolution is crucial for studying the well-posedness of
the mild solution, strong Feller property and accessibility property of the processes.
We hope that the results in this section will also be useful for studying the other SPDEs forced by
$\alpha$-stable type noises (\cite{DXZ12}). \\

{\bf Acknowledgements:} The author would like to gratefully thank Enrico Priola and Jerzy Zabczyk for leading him into
 the research field of SPDEs forced by L\'evy noises. He would also like to gratefully thank Zhao Dong and Xicheng Zhang for very useful
discussions.
\ \ \\


\section{Stochastic real Ginzburg-Landau equations driven by additive $\alpha$-stable noises}
Let $\T= \R/\Z$ be equipped with the usual Riemannian metric, and let $d \xi$
denote the Lebesgue measure on $\T$. Then
$$H:=\bigg\{x\in L^2(\T, \R): \int_\T x(\xi) d\xi =0\bigg\}$$
is a separable real Hilbert space with inner product
$$\Ll x,y \Rr_H:=\int_\T x(\xi)y(\xi) d\xi,\ \ \ \ \ \forall \ x, y \in H.$$
For $x\in C^2(\T)$, the Laplacian operator $\Delta$ is given by $\Delta x= x''.$
Let $(A, \mcl D(A))$ be the closure of $(-\Delta, C^2(\T)\cap H)$ in $H$,
which is a positively definite self-adjoint operator on $H$. \\

Denote $\Z_*:=\Z \setminus \{0\}$, $\{e_k\}_{k \in \Z_*}$ with $e_k=e^{i2 \pi k\xi}$ $(k \in \Z_*)$
is an orthonormal basis of $H$. For each $x \in H$,
it can represented by
$$x=\sum_{k \in \Z_*} x_k e_k;  \ \ \ \ x_k \in \mathbb C, x_{-k}=\overline{x_k}.$$
Write
$$\gamma_k:=4 \pi^2 |k|^2, \ \ \ \ \ \ k \in \Z_*,$$
it is easy to check $A e_k=\gamma_k e_k$ for all $k \in \Z_*$ and that
 $$\|A^\sigma x\|^2_H=\sum_{k \in \Z_*} |\gamma_k|^{2 \sigma} |x_k|^2, \ \ \ \ \ \ \ \sigma \ge 0,$$
provided the sum on the right hand side is finite.
Denote
$$V:= \mcl D(A^{1/2}),$$
it gives rise to a Hilbert space,
which is densely and compactly embedded in $H$. For each $x \in V$ with $x=\sum_{k \in \Z_*} x_k e_k$, we have
$$\|x\|^2_V=\sum_{k \in \Z_*} \gamma_k |x_k|^2.$$
Let $z(t)$ be a one-dimensional symmetric $\alpha$-stable process with $0<\alpha<2$. Its infinitesimal generator~$\mcl A$ is given by
\begin{equation} \label{e:fraclap}
\mcl A f(x):=\frac{1}{C_{\alpha}} \int_{\mathbb{R}}
\frac{f(y+x)-f(x)}{|y|^{\alpha+1}}dy, \;\; \ f \in C^2_b(\R),
\end{equation}
where $C_{\alpha}= -\int_{\mathbb{R}} (\cos y-1)\frac{dy}{|y|^{1+\alpha}}$;
see \cite{sato}.
It is well known that~$z(t)$ has the following characteristic function:
\begin{\eqn*}
\E [e^{i \lambda z(t)}]=e^{-t|\lambda|^{\alpha}},
\end{\eqn*}
$t \ge 0$, $\lambda \in \R$. \\

We shall study the 1D stochastic real Ginzburg-Landau equation on $\T$ as the following
\begin{equation} \label{e:XEqn}
\begin{cases}
dX_t + [A X_t + N(X_t)] dt = dL_t, \\
X_0=x,
\end{cases}
\end{equation}
where
\begin{itemize}
\item[(i)] The nonlinear term $N$ is defined by
\begin{equation*} \label{e:NonlinearB}
N(u)=-(u-u^3), \ \ \ \ \ u \in H.
\end{equation*}
\item[(ii)] $L_t=\sum_{k \in \Z_*} \beta_k l_k(t) e_k$ is a cylindrical $\alpha$-stable processes on $H$ with
$\{l_k(t)\}_{k\in \Z_*}$ being i.i.d. 1 dimensional symmetric $\alpha$-stable process sequence with $\alpha>1$. Moreover,
there exist some $C_1, C_2>0$ so that
$C_1 \gamma_k^{-\beta} \le |\beta_k| \le C_2 \gamma_k^{-\beta}$ with $\beta>\frac 12+\frac 1{2\alpha}$.
\end{itemize}
\begin{rem}
The condition $\beta>\frac 12+\frac 1{2\alpha}$ in (ii) guarantees that
the convolution $Z_t$ defined by \eqref{e:OUAlp} are in $V$.
\end{rem}

Let $C>0$ be some constant and let $C_p>0$ be some constant depending on some parameter $p$.
We shall often use the following inequalities:
\Be \label{e:PoiInq}
\|A^{\sigma_1} x\|_H \le  C_{\sigma_1, \sigma_2} \|A^{\sigma_2} x\|_H, \ \ \ \ \ \ \forall \ \sigma_1 \le \sigma_2, \ \forall x \in H;
\Ee
\Be \label{e:eAEst}
\|A^{\sigma} e^{-At}\| \le C_\sigma t^{-\sigma}, \ \ \ \ \ \forall \ \sigma>0;
\Ee
\Be \label{e:NInnPro}
\langle x, -N(x)\rangle_H \le \frac 14, \ \ \ \ \forall \ x \in H;
\Ee
\Be \label{e:NVEst}
\|N(x)\|_V \leq C (\|x\|_V+\|x\|^3_V), \ \ \ \ \forall \ x \in V;
\Ee
\Be \label{e:NxyHEst1}
\|N(x)-N(y)\|_H \le C(1+\|A^{\frac 14}x\|^2_H+\|A^{\frac 14}y\|^2_H)\|x-y\|_H, \ \ \ \forall \ x,y \in H.
\Ee
For all $\sigma \ge \frac 16$,
\Be \label{e:NxyHEst}
\|N(x)-N(y)\|_H \le C(1+\|A^{\sigma}x\|^2_H+\|A^{\sigma}y\|^2_H)\|A^{\sigma}(x-y)\|_H, \ \ \ \forall \ x,y \in H;
\Ee
\Be \label{e:NHEst}
\|N(x)\|_H \le C(1+\|A^{\sigma} x\|^3_H), \ \ \ \forall \ x \in H.
\Ee
We shall show \eqref{e:NInnPro}-\eqref{e:NHEst} in the appendix.\\

Let $E$ be a Banach space and let $T>0$ be arbitrary. Denote by $B_b(E)$ the space of
bounded measurable functions: $f: E \rightarrow \R$. Denote by $D([0,T];E)$ the space of the functions
$f: [0,T] \rightarrow E$ which has left limit and is right continuous. Denote by $C([0,T];E)$ the space of the functions
$f: [0,T] \rightarrow E$ which is continuous.
If $f \in D([0,T];E)$, it is said to be C$\grave{a}$dl$\grave{a}$g in $E$.
\\

The main results of this paper are the following three theorems.
\begin{thm} \label{t:MaiThm}
The following statements hold:
\begin{enumerate}
\item \label{1} For every $x \in H$ and $\omega \in \Omega$ a.s.,
Eq. \eqref{e:XEqn} admits a unique mild solution $X.(\omega) \in D([0,\infty);H) \cap D((0,\infty);V)$.
Moreover, $X_.(\omega)$ has the following form:
\Bes
X_t(\omega)=e^{-At} x+\int_0^t e^{-A(t-s)} N(X_s(\omega))ds+\int_0^t e^{-A(t-s)} d L_s(\omega), \ \ \ \forall \ t>0.
\Ees
\item \label{2} $X$ is a Markov process.
\item \label{3} For every $x \in V$ and $\omega \in \Omega$ a.s., we have $X_{.}(\omega) \in D([0,\infty);V)$.
Moreover, for every $T>0$,
\Bes
\sup_{0 \le t \le T} \|X_t(\omega)\|_V \le C,
\Ees
where $C$ is some constant depending on $T, \alpha, \beta$ and $\omega$.
\end{enumerate}
\end{thm}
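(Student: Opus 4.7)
The plan is to use the standard decomposition $X_t = Y_t + Z_t$, where $Z_t := \int_0^t e^{-A(t-s)} dL_s$ is the stochastic $\alpha$-stable convolution. First I would invoke the maximal inequality of Section \ref{s:ConSta}, together with the decay $|\beta_k| \le C_2 \gamma_k^{-\beta}$ with $\beta > \tfrac12 + \tfrac1{2\alpha}$, to conclude that a.s.\ $Z_\cdot(\omega) \in D([0,\infty); V)$ and $\sup_{t \le T}\|Z_t(\omega)\|_V < \infty$ for every $T > 0$. The theorem then reduces to solving the pathwise random PDE
\Bes
\frac{d}{dt} Y_t + A Y_t = -N(Y_t + Z_t), \qquad Y_0 = x,
\Ees
in the mild sense.

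For part \eqref{1}, I would construct $Y$ by Galerkin approximation on $H_n = \mathrm{span}\{e_k : 1 \le |k| \le n\}$: the shifted equation has a locally Lipschitz finite-dimensional drift, and an $L^2$ energy estimate yields the global bound. Multiplying by $Y^n$ and expanding the cubic term with $u^n := Y^n + Z^n$ produces the dissipative contribution $-\|u^n\|_{L^4}^4$ together with cross terms that, via Young's inequality and the one-dimensional Sobolev embedding $V \hookrightarrow L^\infty$ (which controls all powers of $Z^n$ pathwise), give
\Bes
\sup_{t \le T}\|Y^n_t\|_H^2 + \int_0^T \|Y^n_s\|_V^2 ds \le C(T,\omega),
\Ees
uniformly in $n$. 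Combined with the smoothing bound \eqref{e:eAEst}, the mild formula then upgrades the regularity to $Y^n \in D((0,T]; V)$, and a standard compactness argument lets me pass to the limit. Uniqueness I would obtain by subtracting two solutions $W = Y_1 - Y_2$, applying the local Lipschitz estimate \eqref{e:NxyHEst1}, interpolating $\|A^{1/4}u\|_H^2 \le \|u\|_H \|u\|_V$, and using the $L^2(0,T;V)$ bound above to make the Gronwall coefficient integrable.

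Part \eqref{2} is then a routine consequence of pathwise uniqueness and the stationary, independent-increments character of $L$. For part \eqref{3}, with $x \in V$, I would run an energy estimate in $V$: multiplying the $Y$-equation by $AY$ and integrating by parts in one space dimension yields
\Bes
\frac{1}{2}\frac{d}{dt}\|Y\|_V^2 + \|AY\|_H^2 + 3\int_{\T}(Y+Z)^2 (Y')^2 d\xi = \langle Y+Z, Y\rangle_V - 3\int_{\T}(Y+Z)^2 Z' Y' d\xi.
\Ees
A weighted Cauchy--Schwarz absorbs half of the mixed cubic term into the dissipative piece on the left, and the surviving $\int (Y+Z)^2 (Z')^2 d\xi$ is bounded using $V \hookrightarrow L^\infty$ by $C(\|Y\|_V^2 + \|Z\|_V^2)\|Z\|_V^2$. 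The result is an inequality of the form
\Bes
\frac{d}{dt}\|Y_t\|_V^2 \le C(1+\|Z_t\|_V^2)\|Y_t\|_V^2 + C(\|Z_t\|_V^2 + \|Z_t\|_V^4),
\Ees
and Gronwall produces the claimed supremum bound since $\|Z_\cdot\|_V$ is locally bounded a.s.

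The main obstacle is that every pathwise step for $Y$ depends on the a.s.\ $V$-regularity of $Z$, and because $\alpha$-stable noise has no second moment one cannot appeal to the Burkholder--Davis--Gundy inequality or the Da Prato--Zabczyk factorization to supply it. The substantive work therefore lives in Section \ref{s:ConSta}; once the maximal inequality there is available, the deterministic analysis of $Y$ proceeds much as in the Wiener case, with the one-dimensional Sobolev embedding $V \hookrightarrow L^\infty$ providing just enough room to handle the non-Lipschitz cubic nonlinearity.
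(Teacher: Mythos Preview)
Your proposal is correct and the overall decomposition $X=Y+Z$ together with the reliance on Section~\ref{s:ConSta} for the pathwise $V$-regularity of $Z$ matches the paper exactly. The route you take for $Y$, however, is genuinely different. The paper does \emph{not} use Galerkin approximation or compactness for well-posedness: instead, Lemma~\ref{l:LocExUnH} runs a Banach fixed-point argument in the weighted space $\{u:\sup_{0\le t\le T} t^{\sigma}\|A^{\sigma}u_t\|_H\le B\}$ with $\sigma=\tfrac16$ to get a local solution, then Lemma~\ref{l:YGlExUn} globalizes via the $H$ a~priori bound \eqref{e:PriEst} (which is essentially your energy estimate) and patches local $V$-solutions from Lemma~\ref{l:LocExUnH}(ii). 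For part~(3) the paper again avoids a direct $V$ energy identity: it simply observes that the local $V$-solution from the fixed-point lemma agrees, by uniqueness in $C([0,\hat T];H)\cap C((0,\hat T];V)$, with the global solution already built, and hence $Y\in C([0,\infty);V)$.

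What each approach buys: the paper's contraction-mapping route delivers local uniqueness for free and never needs to justify testing against $AY$ (which, strictly speaking, you would have to do at the Galerkin level, since the mild solution is not a~priori in $\mathcal{D}(A)$). Your approach, on the other hand, gives a more explicit and quantitative $V$ bound via the Gronwall inequality $\tfrac{d}{dt}\|Y\|_V^2\le C(1+\|Z\|_V^2)\|Y\|_V^2+C(\|Z\|_V^2+\|Z\|_V^4)$, which is arguably more informative than the paper's soft ``continuous on a compact interval, hence bounded'' conclusion. Both are standard and both work; the paper's choice is slightly shorter because it recycles the same fixed-point lemma twice rather than setting up Galerkin compactness.
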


\begin{thm} \label{t:Inv}
$X$ admits at least one invariant measure. The invariant measures are supported on $V$.
\end{thm}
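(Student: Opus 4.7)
The natural strategy is Krylov-Bogoliubov: fix $x=0$ (or any $x\in V$), define the time-averaged measures
\[
\mu_T=\frac 1T\int_0^T P_t(0,\cdot)\,dt,
\]
and show that $\{\mu_T\}_{T\ge 1}$ is tight on $H$. Since $V$ is compactly embedded in $H$, tightness will follow from a uniform bound of the form $\sup_{t\ge 1}\E\|X_t\|_V^{p}\le C$ for some $p\in(0,\alpha)$ (we cannot ask for $p=2$ because the driving $\alpha$-stable noise has no finite second moment). Markov's inequality then concentrates $\mu_T$ on balls of $V$ of finite $H$-diameter, and any weak limit point of $\mu_T$ (which exists by Prokhorov) is an invariant measure by the standard Krylov-Bogoliubov argument. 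The second assertion, that every invariant measure is supported on $V$, will be a direct byproduct: if $\mu$ is invariant, then $\mu(\|\cdot\|_V\le R)=\mu_T(\|\cdot\|_V\le R)$ for all $T$, and the uniform-in-$t$ $V$-moment bound forces $\mu(V)=1$.

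To produce the uniform $V$-moment bound the plan is to follow the decomposition used in \cite{DXZ11}. Let $Z_t$ be the stochastic $\alpha$-stable convolution
\[
Z_t=\int_0^t e^{-A(t-s)}\,dL_s,
\]
and set $Y_t=X_t-Z_t$. Then $Y_t$ solves the random PDE
\[
\partial_t Y_t+AY_t+N(Y_t+Z_t)=0,\qquad Y_0=x.
\]
The maximal inequality for stochastic $\alpha$-stable convolution established in Section~\ref{s:ConSta}, combined with the assumption $\beta>\frac12+\frac{1}{2\alpha}$, gives $\E\sup_{t\in[0,T]}\|Z_t\|_V^{p}<\infty$ and in fact $\sup_{t\ge 0}\E\|Z_t\|_V^{p}<\infty$ for suitable $p\in(0,\alpha)$. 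For $Y_t$ I would exploit the dissipative structure of the cubic nonlinearity: testing the equation against $Y_t$ in $H$ and using
\[
\langle -N(u),u\rangle_H=\|u\|_{L^2}^2-\|u\|_{L^4}^4
\]
together with Young's inequality absorbs the lower-order growth in $Z$, yielding an estimate of the form
\[
\frac{d}{dt}\|Y_t\|_H^2+\|Y_t\|_V^2+\|Y_t\|_{L^4}^4\le C\bigl(1+\|Z_t\|_V^{q}\bigr)
\]
for some $q$. Gronwall and the $Z$-bound then deliver $\sup_{t\ge 1}\E\|Y_t\|_V^{p}<\infty$ (after one more differentiation in time or a time-localization argument to promote the $H$-bound to a $V$-bound via \eqref{e:eAEst}).

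The main obstacle, compared with the classical Wiener-noise case, is that several familiar tools are unavailable: there is no Burkholder-Davis-Gundy inequality for $L^2$-moments of the convolution, no factorization technique in the usual form, and the cubic nonlinearity forbids any Lipschitz argument. This forces the entire chain of estimates to live on the fractional-moment scale $p<\alpha$, which in turn constrains which exponents of $\|Z_t\|_V$ we may carry through the Young-inequality step. The restriction $\alpha\in(3/2,2)$ assumed globally in the paper is precisely what gives enough integrability to close the estimate (so that $q$ above is below $\alpha$). Once the uniform $V$-moment bound is in hand, tightness of $\mu_T$, existence of an invariant measure via weak limits, and the support statement all follow routinely.
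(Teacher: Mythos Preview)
Your Krylov--Bogoliubov strategy and the $Y+Z$ decomposition are reasonable, but the argument contains a concrete error and the route differs from the paper's.

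The error is the claim that the exponent $q$ in your energy inequality can be taken below $\alpha$, and that this is what the restriction $\alpha\in(3/2,2)$ buys. Neither is correct. First, $\alpha>3/2$ is \emph{not} assumed for Theorem~\ref{t:Inv}; it appears only in Theorem~\ref{t:Erg}, where it is needed for the strong Feller gradient estimate. Existence of invariant measures requires only $\alpha>1$. Second, when you test $-N(Y+Z)$ against $Y$ and expand the cubic, the cross term $\int_\T Y Z^3\,d\xi$ (after Young) forces $\|Z\|_{L^4}^4\le C\|Z\|_V^4$ on the right-hand side; this is exactly \eqref{e:NuvuH}--\eqref{e:PriEst}. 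So $q=4$, not $q<\alpha$, and you cannot take expectation of the bound directly. One can still rescue a moment of $\|Y_t\|_H$ by raising \eqref{e:PriEst} to a power $p/2<1$ and applying Jensen to the time integral, obtaining finiteness for $2p<\alpha$; but this is not the mechanism you describe, and your subsequent promotion to a \emph{pointwise-in-$t$} $V$-moment $\sup_{t\ge 1}\E\|X_t\|_V^p<\infty$ is left unjustified.

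The paper proceeds differently, following \cite{DXZ11}: it applies It\^o's formula to the Galerkin approximation $X^m$ with the Lyapunov function $f(u)=(\|u\|_H^2+1)^{1/2}$. The square root is the key device: $f$ has \emph{globally bounded} first and second derivatives, so the jump and compensator integrals in the L\'evy--It\^o decomposition are finite despite the noise lacking second moments. Passing $m\to\infty$ via Fatou yields the time-averaged bound
\[
\E\int_0^T\|X_s\|_V\,ds\le C(1+\|x\|_H+T),
\]
which is all Krylov--Bogoliubov needs; no pointwise $V$-moment is required. The support assertion follows from the instantaneous regularization $X_\cdot\in D((0,\infty);V)$ of Theorem~\ref{t:MaiThm}, which gives $P_t(x,V)=1$ for every $t>0$ and $x\in H$, hence $\mu(V)=\int P_t(x,V)\,\mu(dx)=1$ for any invariant $\mu$.
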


\begin{thm} \label{t:Erg}
$X$ admits a unique invariant measure if $\alpha \in (3/2,2)$ and $\frac 12+\frac{1}{2\alpha}<\beta<\frac 32-\frac{1}{\alpha}$.
\end{thm}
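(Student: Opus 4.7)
By Theorem \ref{t:Inv} there is at least one invariant measure $\mu$ for $X$, and every such invariant measure is supported on $V$. Hairer's criterion \cite{Hai09} reduces uniqueness to showing that (a) the semigroup $P_t$ is strong Feller at some $t>0$ (so that all invariant measures are equivalent on the topological support of any one of them) and (b) the process is accessible to $0$ (so that all supports must contain $0$, forcing them to coincide). Hence the entire proof splits into the two assertions: strong Feller and accessibility to zero. The parameter interval $\alpha\in(3/2,2)$, $\tfrac12+\tfrac1{2\alpha}<\beta<\tfrac32-\tfrac1{\alpha}$ will appear at exactly one place, namely the gradient estimate of \cite{PZ11}, which demands the noise to be sufficiently non-degenerate ($\beta$ not too large) while Theorem \ref{t:MaiThm} requires $\beta$ not too small.

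\textbf{Step 1: Strong Feller.} Because $N$ is only locally Lipschitz, I would first truncate. Fix $R>0$, pick a smooth cutoff $\chi_R:[0,\infty)\to[0,1]$ equal to $1$ on $[0,R]$ and vanishing on $[R+1,\infty)$, and define $N_R(x)=\chi_R(\|x\|_V)N(x)$. By \eqref{e:NVEst} the map $N_R:V\to V$ is bounded and globally Lipschitz, so the truncated equation admits a Markov semigroup $P_t^R$ on $V$. For this semigroup the Bismut--Elworthy--Li type gradient estimate of \cite{PZ11} (applicable precisely when $\alpha>3/2$ and $\beta<\tfrac32-\tfrac1\alpha$, ensuring the needed smoothing of the $\alpha$-stable Ornstein--Uhlenbeck semigroup) yields
\[
|\nabla P_t^R\phi(x)|\le C_R(t)\|\phi\|_\infty,\qquad \phi\in B_b(V),\ x\in V,
\]
so $P_t^R$ is strong Feller on $V$. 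To transfer this to the untruncated semigroup, introduce $\tau_R:=\inf\{t\ge0:\|X_t\|_V>R\}$; up to $\tau_R$ the processes $X$ and $X^R$ coincide, so for $x,y\in V$ with $\|x\|_V,\|y\|_V\le R/2$ and $\phi\in B_b(H)$,
\[
|P_t\phi(x)-P_t\phi(y)|\le |P_t^R\phi(x)-P_t^R\phi(y)|+2\|\phi\|_\infty\bigl(\PP(\tau_R^x\le t)+\PP(\tau_R^y\le t)\bigr).
\]
The $V$-bound in Theorem \ref{t:MaiThm}(3), together with the maximal inequality for the stochastic convolution established in Section \ref{s:ConSta}, makes the tail $\PP(\tau_R\le t)$ arbitrarily small for $R$ large, and the first term vanishes as $y\to x$ by the gradient estimate. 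This proves strong Feller at $t>0$ for starting points in $V$. The smoothing $e^{-At}:H\to V$ for $t>0$ combined with the Markov property then upgrades strong Feller to all of $H$.

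\textbf{Step 2: Accessibility to zero.} Since discontinuous trajectories and the cubic nonlinearity preclude a classical control argument, I would use a direct two-term decomposition. Write $X_t=Y_t+Z_t$, where $Z_t=\int_0^t e^{-A(t-s)}dL_s$ is the stochastic convolution and $Y_t$ solves a random PDE with forcing driven by $Z$. The maximal inequality of Section \ref{s:ConSta} yields $\PP\bigl(\sup_{0\le t\le T}\|Z_t\|_V\le\varepsilon\bigr)>0$ for every $\varepsilon,T>0$. On this small-noise event the energy estimate \eqref{e:NInnPro} forces $\|Y_t\|_H$ to contract exponentially once $\|Y_t\|_H$ is bounded, so that for $T$ sufficiently large one obtains $\|X_T\|_H<\delta$ with positive probability, for any prescribed $\delta>0$ and any initial condition $x\in H$. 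This is precisely accessibility of $0$.

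\textbf{Main obstacle.} The delicate step is the strong Feller property: the gradient estimate of \cite{PZ11} applies only in a restricted parameter regime and only to cutoff dynamics, so the crux is the cutoff-and-transfer argument together with checking that the $V$-bound from Theorem \ref{t:MaiThm}(3) quantifies into usable exit-time estimates $\PP(\tau_R\le t)\to0$. The precise interval for $(\alpha,\beta)$ in the statement is dictated by the intersection of the gradient estimate regime with the mild-solution well-posedness regime from Section~\ref{s:ConSta}.
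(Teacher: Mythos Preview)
Your proposal is correct and follows essentially the same route as the paper: strong Feller is obtained by truncating $N$ to a bounded Lipschitz map on $V$, invoking the gradient estimate of \cite{PZ11} (which forces $\beta<\tfrac32-\tfrac1\alpha$ and hence $\alpha>3/2$), transferring to the untruncated dynamics via the exit time $\tau_R$ controlled through Lemma~\ref{l:ZEst}, and then lifting from $V$ to $H$ by the smoothing of $e^{-At}$; accessibility to $0$ is obtained exactly as you describe, from the small-ball estimate for $Z_t$ (Lemma~\ref{l:Acc}) combined with the $H$-energy contraction \eqref{e:PriEst} for $Y_t$. The only cosmetic difference is that the paper controls $\PP(\tau_\rho\le t)$ via the local bound in Lemma~\ref{l:LocExUnH}(ii) rather than Theorem~\ref{t:MaiThm}(3), but the substance is the same.
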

\ \ \ \

\section{Stochastic convolution of $\alpha$-stable noises $(Z_t)_{t \ge 0}$} \label{s:ConSta}
Consider the following Ornstein-Uhlenbeck process:
\Be \label{e:OUAlp}
Z_t=\int_0^t e^{-A(t-s)} d L_s=\sum_{k \in \Z_{*}} z_{k}(t) e_k
\Ee
where $$z_{k}(t)=\int_0^t e^{-\gamma_k(t-s)}
\beta_k d l_k(s), \ \ \ \ \gamma_k=4 \pi^2 |k|^2.$$

We shall prove the following two lemmas in this section: the first one
is a maximal inequality of $(Z_t)_{t \ge 0}$,
while the other claims that $(Z_t)_{0 \le t \le T}$ stays, with positive probability,
in arbitrary small ball with zero center for all $T>0$. These two lemmas will play a crucial role in proving
strong Feller and accessibility for the solution of Eq. \eqref{e:XEqn}. \cite{BrHa09} established a nice
maximal inequality for the stochastic convolution of a family of L\'evy noises, but these noises
do not include $\alpha$-stable noises.

\begin{lem} \label{l:ZEst}
Let $T>0$ be arbitrary.
For all $0 \leq \theta<\beta-\frac 1{2 \alpha}$ and all $0<p<\alpha$, we have
\Be
\E \sup_{0 \leq t \le T}\|A^\theta Z_t\|^p_{H} \le CT^{p/\alpha},
\Ee
where $C$ depends on $\alpha,\theta, \beta, p$.
\end{lem}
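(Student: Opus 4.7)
The plan is to replace the missing BDG-type maximal inequality for $\alpha$-stable stochastic integrals by a pathwise integration-by-parts identity. Because $s\mapsto e^{-\gamma_k(t-s)}$ is $C^1$ and $l_k(0)=0$, the semimartingale product rule applied to $s\mapsto e^{-\gamma_k(t-s)}l_k(s)$ (with $t$ fixed) gives
\[
\int_0^t e^{-\gamma_k(t-s)}\,dl_k(s)=l_k(t)-\gamma_k\int_0^t e^{-\gamma_k(t-s)}l_k(s)\,ds,
\]
so setting $L_k^{*}:=\sup_{0\le t\le T}|l_k(t)|$ and using $\gamma_k\int_0^t e^{-\gamma_k(t-s)}\,ds\le 1$ one obtains the pathwise estimate $\sup_{0\le t\le T}|z_k(t)|\le 2|\beta_k|\,L_k^{*}$. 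This identity is the crucial move: the singularity carried by $dl_k$ is pushed into the harmless boundary term $l_k(t)$.

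Since $p<\alpha<2$, the subadditivity $(\sum a_k)^{p/2}\le\sum a_k^{p/2}$ applied to the Fourier expansion of $\|A^\theta Z_t\|_H^2$ yields
\[
\sup_{0\le t\le T}\|A^\theta Z_t\|_H^p\le\sup_{0\le t\le T}\sum_{k\in\Z_*}\gamma_k^{p\theta}|z_k(t)|^p\le 2^p\sum_{k\in\Z_*}\gamma_k^{p\theta}|\beta_k|^p(L_k^{*})^p.
\]
Taking expectation and invoking the $\alpha$-self-similarity $\{l_k(Tt)\}_{t\ge 0}\stackrel{d}{=}\{T^{1/\alpha}l_k(t)\}_{t\ge 0}$, together with the standard bound $\kappa_p:=\E\sup_{0\le s\le 1}|l_k(s)|^p<\infty$ for $p<\alpha$ (which follows from $\E|l_k(1)|^p<\infty$ and a L\'evy-type maximal inequality), one arrives at
\[
\E\sup_{0\le t\le T}\|A^\theta Z_t\|_H^p\le 2^p\kappa_p\,T^{p/\alpha}\sum_{k\in\Z_*}\gamma_k^{p\theta}|\beta_k|^p.
\]

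Using $|\beta_k|\le C_2\gamma_k^{-\beta}$ and $\gamma_k\sim|k|^2$, the last series is comparable to $\sum_k|k|^{2p(\theta-\beta)}$, which converges iff $\theta<\beta-\tfrac{1}{2p}$. For any $\theta<\beta-\tfrac{1}{2\alpha}$, continuity of $q\mapsto\beta-\tfrac{1}{2q}$ allows one to pick $q\in(p,\alpha)$ close enough to $\alpha$ so that $\theta<\beta-\tfrac{1}{2q}$, yielding the claimed bound at exponent $q$; the original exponent $p<q$ is then recovered from Jensen's inequality, $\E\sup(\cdot)^p\le(\E\sup(\cdot)^q)^{p/q}$, which preserves the scaling $T^{p/\alpha}$. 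The one step that requires real care is the integration-by-parts identity, which is what makes the whole argument possible; once that is in hand, the remainder reduces to deterministic estimation, scaling of scalar stable processes, and parameter bookkeeping.
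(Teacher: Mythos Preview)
Your argument is correct and shares the paper's key device---the integration-by-parts identity $z_k(t)=\beta_k l_k(t)-\gamma_k\int_0^t e^{-\gamma_k(t-s)}\beta_k l_k(s)\,ds$---but diverges from the paper in how it handles the infinite-dimensional part. The paper stays on the Hilbert space level: it proves $\E\|A^\theta L_t\|_H^p\le C t^{p/\alpha}$ via a Rademacher/Khintchine argument, shows $\|A^\theta L_t\|_H$ is a right-continuous submartingale, applies Doob's inequality to control $\sup_t\|A^\theta L_t\|_H^p$, and then estimates the drift correction $Y_t=\int_0^t Ae^{-A(t-s)}L_s\,ds$ pathwise using $\|A^{1-\varepsilon}e^{-A(t-s)}\|$. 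Your route is more elementary: you work componentwise, use the concavity bound $(\sum a_k)^{p/2}\le\sum a_k^{p/2}$ to flatten the Hilbert norm into an $\ell^p$-type sum, and reduce everything to the scalar self-similarity and reflection bound for a single $\alpha$-stable process. The price is that your series converges only for $\theta<\beta-\tfrac{1}{2p}$, so you need an extra Jensen step to reach all $\theta<\beta-\tfrac{1}{2\alpha}$; the paper's Khintchine step produces $\sum_k|\beta_k|^\alpha\gamma_k^{\alpha\theta}$ directly, giving the full range for any $p\in(1,\alpha)$ (and then H\"older handles $p\le 1$). Both approaches are valid; yours avoids the Rademacher machinery and the submartingale/Doob argument, while the paper's keeps the estimate uniformly sharp in~$p$.
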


\begin{lem} \label{l:Acc}
Let $\tl \theta \in [0,\beta-\frac 1{2 \alpha})$ be arbitrary. For all $T>0$ and $\e>0$, we have
\Bes
\PP(\sup_{0 \le t\le T} \|A^{\tl \theta} Z_t\|_H \le \e)>0.
\Ees
\end{lem}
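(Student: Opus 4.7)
The plan is to decompose $Z$ into a finite-dimensional low-mode part and a high-mode remainder, both of which are independent by construction, then separately show each can be made small with positive probability. Write $Z_t = Z_t^N + R_t^N$ where
\Bes
Z_t^N := \sum_{1 \le |k| \le N} z_k(t) e_k, \qquad R_t^N := \sum_{|k| > N} z_k(t) e_k.
\Ees
Since the $\{l_k\}_{k \in \Z_*}$ are independent, $Z^N$ and $R^N$ are independent processes, so
\Bes
\PP\lt(\sup_{0 \le t \le T} \|A^{\tl \theta} Z_t\|_H \le \e\rt) \ge \PP\lt(\sup_{0 \le t \le T}\|A^{\tl \theta} Z_t^N\|_H \le \e/2\rt)\, \PP\lt(\sup_{0 \le t \le T}\|A^{\tl \theta} R_t^N\|_H \le \e/2\rt),
\Ees
and it suffices to show both factors are positive for $N$ chosen appropriately.

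For the remainder, exploit the strict inequality $\tl \theta < \beta - \frac{1}{2\alpha}$ by picking $\theta' \in (\tl \theta, \beta - \frac{1}{2\alpha})$ and $p \in (0,\alpha)$. Since $\gamma_k \ge \gamma_{N+1}$ for $|k|>N$ and $\tl \theta - \theta' <0$,
\Bes
\|A^{\tl \theta} R_t^N\|_H^2 = \sum_{|k|>N} \gamma_k^{2(\tl \theta - \theta')} \gamma_k^{2\theta'} |z_k(t)|^2 \le \gamma_{N+1}^{2(\tl \theta - \theta')} \|A^{\theta'} Z_t\|_H^2.
\Ees
Lemma \ref{l:ZEst} applied at exponent $\theta'$ then yields $\E \sup_{0 \le t \le T} \|A^{\tl \theta} R_t^N\|_H^p \le C \gamma_{N+1}^{p(\tl \theta - \theta')} T^{p/\alpha} \to 0$ as $N \to \infty$, and Markov's inequality gives $\PP(\sup_t \|A^{\tl \theta} R_t^N\|_H \le \e/2) \ge 1/2$ for $N$ large enough.

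For the finite-dimensional part, integration by parts in $z_k(t) = \beta_k \int_0^t e^{-\gamma_k(t-s)} dl_k(s)$ gives
\Bes
z_k(t) = \beta_k l_k(t) - \beta_k \gamma_k \int_0^t e^{-\gamma_k(t-s)} l_k(s)\, ds,
\Ees
so $|z_k(t)| \le 2|\beta_k| M_k$ where $M_k := \sup_{0 \le s \le T}|l_k(s)|$. Hence
\Bes
\sup_{0 \le t \le T} \|A^{\tl \theta} Z_t^N\|_H^2 \le 4 \sum_{1 \le |k| \le N} \gamma_k^{2\tl \theta} |\beta_k|^2 M_k^2,
\Ees
which is at most $(\e/2)^2$ on the event $\{M_k \le \delta_N, \forall |k| \le N\}$ provided $\delta_N$ is small enough (depending on $N,\tl \theta,\beta,\e$). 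By independence of the $\{l_k\}$, this event has probability $\prod_{|k|\le N} \PP(\sup_{[0,T]}|l_k| \le \delta_N)$, and each factor is strictly positive by the small-ball property of the 1D symmetric $\alpha$-stable L\'evy process: decomposing $l_k$ into its large jumps (a compound Poisson process with positive probability of remaining identically zero on $[0,T]$) and its small-jump martingale part (whose variance on $[0,T]$ can be made arbitrarily small by lowering the jump-size cutoff), Doob's inequality applied to the latter gives the claim. The main technical point is the small-ball bound for 1D stable processes, but the conditioning-on-no-large-jumps argument handles it cleanly; the rest is bookkeeping that uses the slack in the condition $\tl \theta < \beta - 1/(2\alpha)$.
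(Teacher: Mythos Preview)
Your argument is correct and follows essentially the same route as the paper: split into low and high Fourier modes, control the high-mode tail via the maximal inequality (Lemma~\ref{l:ZEst}) and Chebyshev, and handle the finitely many low modes via the integration-by-parts bound $|z_k(t)|\le 2|\beta_k|\sup_{[0,T]}|l_k|$ together with independence and a small-ball estimate for the one-dimensional symmetric $\alpha$-stable process. The only difference is cosmetic: the paper quotes \cite[Proposition~3, Chapter~VIII]{Ber96} (plus scaling) for $\PP(\sup_{[0,T]}|l_k|\le\delta)>0$, whereas you supply a self-contained argument by conditioning on no large jumps and applying Doob's $L^2$ inequality to the bounded-jump martingale remainder; both are valid.
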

\ \ \

\begin{proof} [Proof of Lemma \ref{l:ZEst}]
We only need to show the inequality for the case $p \in (1,\alpha)$ since the
case of $0<p \le 1$ is an immediate corollary by H$\ddot{o}$lder's inequality.
\\

\emph{Step 1.} We claim that for all $\theta \in [0,\beta-\frac 1{2\alpha})$, $\|A^\theta L_t\|_H$ is a right continuous submartingale such that
\Be \label{e:EALT}
\E\|A^{\theta} L_t\|_H^p \le C_{\alpha, p} \left(\sum_{k \in \Z_*} |\beta_k|^\alpha \gamma_k^{\alpha \theta}\rt)^{p/\alpha} t^{p/\alpha}, \ \ \ \ \ \ t>0.
\Ee
where $p \in (1,\alpha)$.
 \\

 We first follow the argument in the proof of \cite[Theorem 4.4]{PZ11} to show \eqref{e:EALT}. Take a Rademacher
sequence $\{r_k\}_{k \in \Z_*}$ in a new probability space
$(\Omega^{'},\mcl F^{'},\PP^{'})$, i.e. $\{r_k\}_{k \in \Z_*}$ are
i.i.d. with $\PP\{r_k=1\}=\PP\{r_k=-1\}=\frac 12$. By the following
Khintchine inequality: for any $p>0$, there exists some $C(p)>0$
such that for arbitrary real sequence $\{h_k\}_{k \in \Z_*}$,
$$\left(\sum_{k \in \Z_*} h^2_k\right)^{1/2} \leq C(p) \left(\E^{'} \left|\sum_{k \in \Z_*} r_k h_k\right|^p\right)^{1/p}.$$
By this inequality, we get
\begin{\eqn} \label{e:EZAtp}
\begin{split}
\E\|A^{\theta} L_t\|^p_H&=\E \left(\sum_{k \in \Z_*} \gamma_k^{2\theta}
|\beta_k|^2|l_k(t)|^2\right)^{p/2} \leq C \E \E^{'}\left|\sum_{k \in \Z_*} r_k \gamma_k^{\theta}
|\beta_k| l_k(t)\right|^p \\
&=C\E^{'}\E\left|\sum_{k \in \Z_*} r_k \gamma_k^{\theta}
|\beta_k| l_k(t)\right|^p
\end{split}
\end{\eqn}
where $C=C^p(p)$. For any $\lambda \in \R$, by the fact $|r_k|=1$
and an approximation argument similar as for getting (4.12) of \cite{PZ11}, one has
\begin{\eqn*}
\begin{split}
\E\exp \left\{i \lambda \sum_{k \in \Z_*} r_k \gamma_k^{\theta}
|\beta_k| l_k(t)\right\}&=\exp\left\{-|\lambda|^\alpha \sum_{k \in \Z_*}
|\beta_k|^\alpha \gamma_k^{\alpha \theta}t
 \right\}
\end{split}
\end{\eqn*}
\vskip 3mm
Now we use (3.2) in \cite{PZ11}: if $X$ is a symmetric random
variable satisfying $$\E \left[e^{i\lambda
X}\right]=e^{-\sigma^{\alpha} |\lambda|^\alpha}$$ for some $\alpha
\in (0,2)$ and any $\lambda \in \R$, then for all $p \in (0,\alpha)$,
$$\E|X|^p=C(\alpha,p)
\sigma^p. $$
Since $\sum_{k \in \Z_*}
|\beta_k|^\alpha \gamma_k^{\alpha \theta}<\infty$, \eqref{e:EALT} holds. \\

Let us now show that $\|A^\theta Z_t\|_H$ is a right continuous submartingale. Denote $L^n_t=\sum_{|k| \le n} \beta_k l_k(t) e_k$ for all
$n \in \N$ and $t>0$, it is clear that $A^\theta L^n_t$ is an $L^p$ martingale with $p \in (1,\alpha)$. Therefore, $\|A^\theta L^n_t\|_H$ is a submartingale, i.e.,
\Bes
\E[\|A^\theta L^n_t\|_H|\mcl F_s] \ge \|A^\theta L^n_s\|_H, \ \ \ t>s.
\Ees
Thanks to the inequality \eqref{e:EALT} with some $p>1$, let $n \rightarrow\infty$, we get
\Bes
\E[\|A^\theta L_t\|_H|\mcl F_s] \ge \|A^\theta L_s\|_H,
\Ees
i.e., $\|A^\theta Z_t\|_H$ is a submartingale.
Observe that $A^\theta L_t=\sum_{k \in \Z^*} \gamma^\theta_k \beta_k l_k(t) e_k$. Since
\Bes
\sum_{k \in \Z_*}\gamma^{\alpha\theta}_k |\beta_k|^\alpha <\infty,
\Ees
it follows from \cite[Theorem 2.2]{LiuZha12} that $A^{\theta} L_t$ has a C\`{a}dl\`{a}g version. Hence, $\|A^\theta L_t\|_H$
has a right continuous version. \\

\emph{Step 2}.
It follows from Ito's product formula (\cite[Theorem 4.4.13]{Ap04}) that for all $k \in \Z_*$,
\Bes
l_k(t)=\int_0^t \gamma_k e^{-\gamma_k(t-s)} l_k(s)ds+\int_0^t e^{-\gamma_k(t-s)} dl_k(s)+\int_0^t \gamma_k e^{-\gamma_k(t-s)} \Delta l_k(s) ds
\Ees
where $\Delta l_k(s)=l_k(s)-l_k(s-)$. Since $l_k(t)$ is an $\alpha$-stable process, $\Delta l_k(s)=0$ for $s \in [0,t]$ a.s. and thus
\Bes
\int_0^t \gamma_k e^{-\gamma_k(t-s)} \Delta l_k(s) ds=0.
\Ees
Therefore,
\Be \label{e:zkIBP}
\begin{split}
z_k(t)&=\beta_k l_k(t)-\int_0^t \gamma_k e^{-\gamma_k(t-s)} \beta_k l_k(s)ds
\end{split}
\Ee
Hence, we get
\Be \label{e:IntByPar}
\begin{split}
Z_t=L_t-Y_t,
\end{split}
\Ee
where
\Be \label{e:YtInt}
Y_t=\int_0^t A e^{-A(t-s)} L_s ds.
\Ee

By Step 1 and Doob's martingale inequality, we have
\Be \label{e:DooIne}
\E\lt[\sup_{0 \leq t \le T} \lf \|A^{\theta} L_t\rg \|^p_H \rt] \le \lt(\frac p{p-1}\rt)^p \E\lt[\lf \|A^{\theta} L_T\rg \|^p_H \rt]
\le C T^{\frac p\alpha}, \ \ \ \ \theta \in [0,\beta-\frac 1{2 \alpha}),
\Ee
where $C$ depends on $\theta, \alpha, \beta$ and $p$.
Choose some $0<\e<\min\{\beta-\frac{1}{2 \alpha}-\theta,1\}$, we have
\Be \label{e:YtIntEst}
\begin{split}
\sup_{0 \le t \le T}\|A^{\theta} Y_t\|_H  & \le \sup_{0 \le t \le T} \int_0^t \|A^{1-\e} e^{-A(t-s)}\|\|A^{\theta+\e} L_s\|_H ds  \\
& \le \sup_{0 \le t \le T}  \|A^{\theta+\e} L_t\|_H \sup_{0 \le t \le T} \int_0^t \|A^{1-\e} e^{-A(t-s)}\| ds. \\
\end{split}
\Ee
If $T \le 1$, using \eqref{e:eAEst} we get
\Bes
\sup_{0 \le t \le T} \int_0^t \|A^{1-\e} e^{-A(t-s)}\| ds \le C\sup_{0 \le t \le T} \int_0^t (t-s)^{-1+\e} ds \le C/\e.
\Ees
If $T>1$, using \eqref{e:eAEst} again we get
\Bes
\begin{split}
\sup_{0 \le t \le T} \int_0^t \|A^{1-\e} e^{-A(t-s)}\| ds &\le \sup_{0 \le t \le 1} \int_0^t \|A^{1-\e} e^{-A(t-s)}\| ds+\sup_{1 \le t \le T}\int_0^t \|A^{1-\e} e^{-A(t-s)}\| ds \\
& \le C/\e+\sup_{1 \le t \le T} \int_{t-1}^t \|A^{1-\e}e^{-A}\| \|e^{-A(t-1-s)}\| ds \\
& \ \ \ +\sup_{1 \le t \le T} \int_0^{t-1} \|A^{1-\e}e^{-A}\| \|e^{-A(t-1-s)}\| ds \\
& \le 2C/\e+\|A^{1-\e}e^{-A}\|\sup_{1 \le t \le T} \int_0^{t-1} e^{-2 \pi (t-1-s)} ds,
\end{split}
\Ees
where the last inequality is by the spectral property of $A$.
Collecting above three inequalities, we have
\Be
\sup_{0 \le t \le T}\|A^{\theta} Y_t\|_H \le C_\e \sup_{0 \le t \le T}  \|A^{\theta+\e} L_t\|_H.
\Ee
This, together with \eqref{e:DooIne}, implies
\Be \label{e:IEst}
\E \sup_{0 \le t \le T}\|A^{\theta} Y_t\|^p_H \le C T^{p/\alpha},
\Ee
where $C$ depends on $\alpha, \beta, \theta, \e$ and $p$. \\

Combining the above inequality with \eqref{e:DooIne} and \eqref{e:IntByPar}, we immediately get the desired inequality.
\end{proof}
\ \ \ \ \ \

\begin{proof} [Proof of Lemma \ref{l:Acc}]
Take $0<\tl \theta<\theta<\beta-\frac{1}{2 \alpha}$,
since $\{z_k(t)\}_{k \in \Z_*}$ are independent sequence, we have
\Bes
\begin{split}
\PP\left(\sup_{0 \le t \le T} \|A^{\tl \theta}Z_t\|_H \le \e \right) &=
\PP \left(\sup_{0 \le t\le T} \sum_{k \in \Z_*}\gamma_k^{2\tl \theta} |z_k(t)|^2 \le \e^2\right) \ge I_1I_2
\end{split}
\Ees
where
$$I_1:=\PP \lt(\sup_{0 \le t\le T} \sum_{|k|>N} \gamma_k^{2\tl \theta} |z_k(t)|^2 \le \e^2/2\rt),$$
$$I_2:=\PP \lt(\sup_{0 \le t\le T} \sum_{|k| \le N} \gamma_k^{2\tl \theta} |z_k(t)|^2 \le \e^2/2\rt),$$
with $N\in \N$ being some fixed large number.
By the spectral property of $A$, we have
\Bes
\begin{split}
I_1 & \ge \PP \lt(\sup_{0 \le t\le T} \sum_{|k|>N}\gamma_k^{2\tl \theta} |z_k(t)|^2 \le \e^2/2\rt) \\
& \ge \PP \lt(\sup_{0 \le t\le T} \|A^\theta Z(t)\|^2_H \le \gamma_N^{2(\theta-\tl \theta)} \e^2/2\rt) \\
&=1-\PP \lt(\sup_{0 \le t\le T} \|A^\theta Z(t)\|^2_H>\gamma_N^{2(\theta-\tl \theta)} \e^2/2\rt) \\
&=1-\PP \lt(\sup_{0 \le t\le T} \|A^\theta Z(t)\|^p_H>\gamma_N^{p(\theta-\tl \theta)} \e^p/2^{p/2}\rt)
\end{split}
\Ees
This, together with Lemma \ref{l:ZEst} and Chebyshev inequality, implies that as $\gamma_N$ is sufficient large
\Bes
I_1 \ge 1-C\gamma_N^{-(\theta-\tl \theta)p} \e^{-p}>0,
\Ees
where $p \in (1, \alpha)$ and $C$ depends on $p, \alpha, \beta, T$. \\

To finish the proof, it suffices to show that
\Be \label{e:IrrI2>0}
I_2>0.
\Ee
Define $A_k:=\{\sup_{0 \le t\le T} |z_k(t)| \le \e/(\sqrt {2N} \gamma^{\tl \theta}_k)\}$,
it is easy to have
\Be \label{e:IrrI2Cal}
I_2  \ge \PP\lt(\bigcap_{|k| \le N} A_k\rt)=\prod_{|k| \le N} \PP(A_k).
\Ee
Recalling \eqref{e:zkIBP}, we have
\Bes
z_k(t)=\beta_k l_k(t)-\int_0^t \gamma_k e^{-\gamma_k (t-s)} \beta_k l_k(s) ds.
\Ees
Furthermore, it follows from a straightforward calculation that
\Bes
\sup_{0 \le t \le T} |\int_0^t \gamma_k e^{-\gamma_k(t-s)} \beta_k l_k(s) ds| \le |\beta_k| \sup_{0 \le t \le T} |l_k(t)|
\ \ \ \ \ \ \ k \ge 1.
\Ees
Therefore,
\Bes
\PP(A_k) \ge \PP \lt(\sup_{0 \le t\le T} |l_k(t)| \le \frac{\e}{2 |\beta_k| \sqrt{2N} \gamma^{\tl \theta}_k}\rt)
\Ees
By \cite[Proposition 3, Chapter VIII]{Ber96}, there exist some
$c, C>0$ only depending on $\alpha$ so that
\Bes
\PP(\sup_{0 \le t \le T} |l(t)| \le 1) \ge C e^{-cT}.
\Ees
This, together with the scaling property of stable process, implies
\Be
\PP(A_k)>0 \ \ \ \ |k| \le N,
\Ee
which, combining with \eqref{e:IrrI2Cal}, immediately implies \eqref{e:IrrI2>0}.
\end{proof}
\ \ \ \ \ \

By Lemma \ref{l:ZEst} above and Theorem 2.2 in \cite{LiuZha12}, we have the following lemma which is important for showing the solution of
Eq. \eqref{e:XEqn} has C\`{a}dl\`{a}g trajectories.
\begin{lem} \label{l2.2}
$Z.$ defined by \eqref{e:OUAlp} has a version in $D([0, \infty);V)$.
\end{lem}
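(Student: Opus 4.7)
The plan is to exploit the integration-by-parts decomposition $Z_t = L_t - Y_t$ from \eqref{e:IntByPar}, where $Y_t = \int_0^t A e^{-A(t-s)} L_s\, ds$, and to show separately that $L$ admits a c\`adl\`ag version in $V$ and that $Y$ admits a pathwise continuous version in $V$. Combining the two contributions then gives $Z_\cdot \in D([0,\infty);V)$.

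For $L$, since $V = \mathcal{D}(A^{1/2})$, I would apply \cite[Theorem 2.2]{LiuZha12} to the series $A^{1/2} L_t = \sum_{k \in \Z_*} \gamma_k^{1/2} \beta_k l_k(t) e_k$, exactly as was done with general $\theta$ in Step~1 of the proof of Lemma~\ref{l:ZEst}. The required summability $\sum_{k \in \Z_*} \gamma_k^{\alpha/2} |\beta_k|^\alpha < \infty$ reduces, via $\gamma_k \asymp |k|^2$ and $|\beta_k| \asymp \gamma_k^{-\beta}$, to $\sum_k |k|^{\alpha(1-2\beta)} < \infty$, which holds precisely thanks to the standing assumption $\beta > \frac{1}{2} + \frac{1}{2\alpha}$.

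For $Y$, I would prove pathwise H\"older continuity in $V$ on every compact interval $[0,T]$. Fix $\varepsilon \in \bigl(0,\, \beta - \tfrac{1}{2\alpha} - \tfrac{1}{2}\bigr)$, which is nonempty, and write
\[
A^{1/2} Y_t = \int_0^t A^{1-\varepsilon} e^{-A(t-s)}\, A^{1/2+\varepsilon} L_s\, ds.
\]
By \eqref{e:DooIne} applied with $\theta = \tfrac{1}{2}+\varepsilon$, the quantity $M_T := \sup_{0 \le s \le T} \|A^{1/2+\varepsilon} L_s\|_H$ is a.s.\ finite. For $0 \le t < t' \le T$, I would split $A^{1/2}(Y_{t'} - Y_t)$ into an integral over $[0,t]$ of the semigroup increment plus an integral over $[t,t']$. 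The $[t,t']$ piece is controlled by $C_\varepsilon\, M_T\, (t'-t)^\varepsilon$ using \eqref{e:eAEst}. For the $[0,t]$ piece, use the identity $e^{-A(t'-s)} - e^{-A(t-s)} = -\int_t^{t'} A e^{-A(r-s)}\, dr$, interchange the order of integration via Fubini, and compute $\int_0^t (r-s)^{-(2-\varepsilon)}\, ds = \tfrac{1}{1-\varepsilon}\bigl[(r-t)^{-(1-\varepsilon)} - r^{-(1-\varepsilon)}\bigr]$, whose integral over $r \in [t,t']$ is bounded by $\varepsilon^{-1}(t'-t)^\varepsilon$.

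The main delicate point is this last interchange: the pointwise bound $\|A^{2-\varepsilon} e^{-A(r-s)}\| \le C(r-s)^{-(2-\varepsilon)}$ is not integrable in $r$ for fixed $s$, so one has to integrate in $s$ first in order to regularize the singularity to the integrable $(r-t)^{-(1-\varepsilon)}$ before integrating in $r$. Once this is handled, $t \mapsto A^{1/2} Y_t$ is a.s.\ $\varepsilon$-H\"older continuous on $[0,T]$, so $Y_\cdot$ has a continuous (hence c\`adl\`ag) version in $V$, and subtracting from the c\`adl\`ag version of $L$ yields the desired version of $Z_\cdot$.
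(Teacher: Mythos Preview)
Your argument is correct, but it takes a different route from the paper. The paper's proof is essentially a one-liner: it verifies the summability condition $\sum_{k \in \Z_*} \gamma_k^{\alpha/2}|\beta_k|^\alpha < \infty$ (which follows from $\beta > \tfrac12 + \tfrac{1}{2\alpha}$, exactly as you noted) and then invokes \cite[Theorem~2.2]{LiuZha12} directly for the Ornstein--Uhlenbeck process $Z$ itself, not merely for $L$. That cited result is precisely about time regularity of generalized OU processes driven by cylindrical stable noise, so it already absorbs the semigroup smoothing that you carry out by hand for $Y$. Your approach instead uses \cite{LiuZha12} only for the driving L\'evy process $L$ and then supplies a self-contained H\"older-continuity argument for the deterministic convolution $Y_t = \int_0^t A e^{-A(t-s)} L_s\,ds$, with the Tonelli/Fubini manoeuvre to tame the $(r-s)^{-(2-\varepsilon)}$ singularity. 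The trade-off: the paper's proof is much shorter but leans more heavily on the external reference, whereas your proof is longer but makes the mechanism transparent and would survive even if one only knew the c\`adl\`ag property of $L$ in $V$. Both are valid; yours is the more elementary and instructive of the two.
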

\begin{proof}
Observe that $A^{\frac 12} L_t=\sum_{k \in \Z^*} \gamma^{\frac 12}_k \beta_k l_k(t) e_k$. By Lemma \ref{l:ZEst} with
$\theta=\frac 12$, we have
\Bes
\sum_{k \in \Z_*}\gamma^{\frac 12\alpha}_k |\beta_k|^\alpha <\infty.
\Ees
This, together with \cite[Theorem 2.2]{LiuZha12}, implies that $Z_t$ has a version which has left limit and is right continuous in $V$.
\end{proof}

\section{Proof of Theorem \ref{t:MaiThm}} \label{s:Y}

For all $\omega \in \Omega$, define $Y_t(\omega):=X_t(\omega)-Z_t(\omega)$, then
\Be \label{e:YEqn}
\p_t Y_t(\omega)+AY_t(\omega)+N(Y_t+Z_t)(\omega)=0,
\ \ \ Y_0(\omega)=x.
\Ee
For each $T>0$, define
\Be \label{e:KT}
K_T(\omega):=\sup_{0 \le t \le T}\|Z_t(\omega)\|_V, \ \ \ \ \ \ \omega \in \Omega.
\Ee
From Lemma \ref{l:Acc}, for every $k \in \N$, there exists some set $N_k \subset \Omega$ such that $\PP(N_k)=0$ and
\Bes
K_k(\omega)<\infty, \ \ \ \ \omega \notin N_k.
\Ees
Define $N=\cup_{k \ge 1} N_k$, it is easy to see $\PP(N)=0$ and that for all $T>0$
\Be \label{e:KTFin}
\begin{split}
K_T(\omega)<\infty,   \ \ \ \ \ \ \omega \notin N.
\end{split}
\Ee

\begin{lem} \label{l:LocExUnH}
The following statements hold:
\begin{itemize}
\item[(i)] For every $x \in H$ and $\omega \notin N$,
 there exists some $0<T(\omega) \le 1$, depending on $\|x\|_H$ and $K_1(\omega)$, such that Eq. \eqref{e:YEqn} admits a unique solution
$Y.(\omega) \in C([0,T];H)$ satisfying for all $\sigma \in [\frac 16,\frac 12]$
\Bes
\|A^{\sigma} Y_t(\omega)\|_H \le C(t^{-\sigma}+1), \ \ \ \ \ t \in (0,T(\omega)],
\Ees
where $C$ is some constant depending on $\|x\|_H, \sigma$ and $K_1(\omega)$.
\item[(ii)] Let $\sigma \in [\frac 16, \frac 12]$. For every $x \in \mcl D(A^{\sigma})$ and $\omega \notin N$,
there exists some $0<\hat T(\omega) \le 1$, depending on $\|x\|_H, \sigma, K_1(\omega)$, such that Eq. \eqref{e:YEqn} admits
a unique solution on $C([0,\hat T(\omega)];\mcl D(A^{\sigma}))$ such that
\Bes
\sup_{0 \le t \le \hat T(\omega)} \|A^{\sigma} Y_t(\omega)\|_H \le 1+\|A^{\sigma} x\|_H.
\Ees
In particular, when $\sigma=1/2$,
\Bes
\sup_{0 \le t \le \hat T(\omega)} \|Y_t(\omega)\|_V \le 1+\|x\|_V.
\Ees
\end{itemize}
\end{lem}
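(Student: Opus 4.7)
The plan is to apply the Banach fixed point theorem to the mild formulation
\[
\Phi(Y)_t := e^{-At}x - \int_0^t e^{-A(t-s)} N(Y_s + Z_s(\omega))\, ds,
\]
exploiting the smoothing \eqref{e:eAEst}, the cubic bound \eqref{e:NHEst}, and the local Lipschitz estimate \eqref{e:NxyHEst}. Throughout, $\omega \notin N$ is fixed, so that \eqref{e:PoiInq} together with the definition of $K_1(\omega)$ yields $\sup_{0\le s\le 1}\|A^{\sigma}Z_s(\omega)\|_H \le C K_1(\omega)$ for every $\sigma \in [0,1/2]$.

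For part (i) I would fix $\sigma_0 = 1/6$, the threshold appearing in \eqref{e:NxyHEst} and \eqref{e:NHEst}, and contract on the weighted space
\[
X_T := \Bigl\{ Y \in C((0,T]; \mcl D(A^{\sigma_0})) : \sup_{0<t\le T} t^{\sigma_0}\|A^{\sigma_0} Y_t\|_H \le R \Bigr\}, \qquad R := 2\|x\|_H + 1.
\]
Combining \eqref{e:NHEst} with the pathwise bound on $Z$ yields $\|N(Y_s+Z_s)\|_H \le C(1 + s^{-1/2}R^3 + K_1^3)$, and together with $\|A^{\sigma_0} e^{-A(t-s)}\| \le C(t-s)^{-1/6}$ and the Beta identity $\int_0^t(t-s)^{-1/6}s^{-1/2}ds = B(5/6,1/2)\,t^{1/3}$ one obtains
\[
t^{\sigma_0}\|A^{\sigma_0}\Phi(Y)_t\|_H \le \|x\|_H + C\,T^{1/6}\bigl(R^3 + T^{2/3}(1+K_1^3)\bigr) \le R
\]
for $T = T(\omega)$ small enough depending only on $\|x\|_H$ and $K_1(\omega)$. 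Contractivity on $X_T$ follows in the same manner from \eqref{e:NxyHEst} with $\sigma = \sigma_0$. Continuity of the resulting fixed point at $t = 0$ in $H$ comes from $e^{-At}x \to x$ in $H$ together with the fact that the Duhamel integral is $o(1)$ in $H$.

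To extend to the whole range $\sigma \in [1/6,1/2]$, I would feed the already-constructed $Y$ back into the mild equation. By \eqref{e:NHEst} applied with exponent $\sigma_0 = 1/6$, $\|N(Y_s+Z_s)\|_H \le C(1 + s^{-1/2}+K_1^3)$, so
\[
\|A^\sigma Y_t\|_H \le C t^{-\sigma}\|x\|_H + C\int_0^t(t-s)^{-\sigma}\bigl(1 + s^{-1/2}+K_1^3\bigr) ds,
\]
and both integrals $B(1-\sigma, 1/2)\,t^{1/2-\sigma}$ and $t^{1-\sigma}/(1-\sigma)$ are finite for every $\sigma \le 1/2$; using $T \le 1$ gives the claimed $C(t^{-\sigma}+1)$. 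For part (ii), no singular weight is needed, since $x \in \mcl D(A^\sigma)$ already: I would iterate on
\[
\hat X_{\hat T} := \Bigl\{ Y \in C([0,\hat T]; \mcl D(A^\sigma)) : \sup_{0 \le t \le \hat T}\|A^\sigma Y_t\|_H \le 1 + \|A^\sigma x\|_H\Bigr\},
\]
use $\|A^\sigma e^{-At}x\|_H \le \|A^\sigma x\|_H$, and bound the Duhamel term by $C(1+\|A^\sigma x\|_H^3+K_1^3)\,\hat T^{1-\sigma}/(1-\sigma)$, which is $\le 1$ for $\hat T = \hat T(\omega)$ small enough; contractivity uses \eqref{e:NxyHEst} as before, and specialising to $\sigma = 1/2$ gives the $V$-bound.

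The main obstacle is that the cubic nonlinearity together with the target range $\sigma \in [1/6,1/2]$ puts us at the edge of integrability: a direct contraction at the top endpoint $\sigma = 1/2$ would force estimation of $\int_0^t(t-s)^{-1/2}s^{-3/2}\,ds$, which diverges. The structure of the argument is therefore to run the contraction at the Sobolev-critical value $\sigma_0 = 1/6$, where \eqref{e:NHEst} and the embedding $\mcl D(A^{1/6})\hookrightarrow L^6$ precisely close the cubic, and then to recover the full range of regularity a posteriori by a single convolution estimate on the solution already in hand.
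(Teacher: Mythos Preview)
Your approach is essentially identical to the paper's: a Banach fixed-point argument at $\sigma_0=1/6$ in the weighted norm $\sup_t t^{1/6}\|A^{1/6}\cdot\|_H$, followed by a bootstrap to $\sigma\in[1/6,1/2]$ via a single convolution estimate, and for (ii) a direct contraction in $C([0,\hat T];\mcl D(A^\sigma))$ with radius $1+\|A^\sigma x\|_H$. One small point the paper handles explicitly that you omit: uniqueness among \emph{all} $C([0,T];H)$ solutions satisfying the stated estimate (not merely uniqueness in your fixed ball $X_T$) is obtained there by a separate three-line Gronwall argument based on \eqref{e:NxyHEst1}, since the contraction itself only yields uniqueness inside $X_T$; also note that your specific choice $R=2\|x\|_H+1$ tacitly assumes the constant in \eqref{e:eAEst} is at most $2$, whereas the paper simply takes the radius $B$ sufficiently large.
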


\begin{proof}
We shall omit the variable $\omega$ for the notational simplicity in the proof,, since no confusions will arise.\\

(i). We shall apply Banach fixed point theorem. Let $0<T \le 1$ and $B>0$
be some constants to be determined later. Take $\sigma=\frac 16$ and define
$$S=\{u \in C([0,T];H): u_0=x, \sup_{0 \le t \le T} t^{\sigma} \|A^{\sigma} u_t\|_H \le B\}.$$
Given any $u, v \in S$, define $d(u,v)=\sup_{0 \le t \le T} t^{\sigma} \|A^{\sigma} (u_t-v_t)\|_H,$
then $(S, d)$ is a closed metric space. Define a map $\mcl F: S \rightarrow C([0,T];H)$ as the following: for any $u \in S$,
$$(\mcl F u)_t=e^{-At} x+\int_0^t e^{-A(t-s)} N(u_s+Z_s)ds, \ \ \ \ t \in [0,T],$$
we aim to show as $T$ is sufficient small and $B$ is sufficiently large,
\begin{itemize}
\item[(a)]  $\mcl Fu \in S$ for $u \in S$,
\item[(b)]  $d(\mcl F u,\mcl F v) \le \frac 12 d(u,v)$ for $u, v\in S$.
\end{itemize}
\vskip 3mm

It is obvious $(\mcl F u)_0=x$. By \eqref{e:eAEst}, \eqref{e:NHEst}, \eqref{e:PoiInq} and Young's inequality, we have
\Bes 
\begin{split}
\|A^{\sigma} (\mcl F u)_t\|_H
& \le  Ct^{-\sigma}\|x\|_H+\int_0^t \|A^{\sigma} e^{-A(t-s)}\|\|N(u_s+Z_s)\|_H ds \\
& \le Ct^{-\sigma}\|x\|_H+C\int_0^t (t-s)^{-\sigma}(1+\|A^{\sigma}u_s+A^{\sigma} Z_s\|^3_H) ds \\
&\le Ct^{-\sigma}\|x\|_H+C\int_0^t (t-s)^{-\sigma}(1+K_1^3+\|A^{\sigma}u_s\|^3_H) ds.
\end{split}
\Ees
Hence,
\Bes
t^{\sigma} \|A^{\sigma} (\mcl F u)_t\|_H \le C \|x\|_H+Ct^{\sigma}\int_0^t (t-s)^{-\sigma} (1+K_1^3+s^{-3 \sigma} B^3) ds
\Ees
As $T>0$ is sufficiently small and $B$ is sufficiently large, (a) immediately follows from the above inequality.
\vskip 3mm

Given any $u, v \in S$, it follows from \eqref{e:eAEst} and \eqref{e:NxyHEst}
\Bes
\begin{split}
& \quad \ t^{\sigma}\|A^\sigma (\mcl F u)_t-A^\sigma (\mcl F v)_t\|_H \\
& \le C T^{\sigma} \int_0^t (t-s)^{-\sigma} \|N(u_s+Z_s)-N(v_s+Z_s)\|_H ds \\
& \le C T^{\sigma} \int_0^t (t-s)^{-\sigma}(1+K_1^2+\|A^{\sigma}u_s\|^2_H+\|A^{\sigma}v_s\|^2_H) \|A^\sigma u_s-A^\sigma v_s\|_H ds \\
& \le C(1+K_1^2) T^{\sigma} \int_0^t (t-s)^{-\sigma} s^{-\sigma} \left[s^{\sigma} \|A^\sigma u_s-A^\sigma v_s\|_H\right] ds \\
& \quad+C T^{\sigma} \int_0^t (t-s)^{-\sigma} s^{-3 \sigma} B^2 \left[s^{\sigma} \|A^\sigma u_s-A^\sigma v_s\|_H\right] ds \\
\end{split}
\Ees
where the last inequality is by the fact $u, v \in S$. This inequality implies
\Bes
\begin{split}
& \quad \  \sup_{0 \le t \le T} t^{\sigma}\|A^\sigma (\mcl F u)_t-A^\sigma (\mcl F v)_t\|_H \\
& \le C[(1+K_1^2)T^{1-\sigma}+B^2 T^{\sigma}] \sup_{0 \le t \le T} t^{\sigma}\|A^\sigma u_t-A^\sigma v_t\|_H.
\end{split}
\Ees
Choosing $T$ small enough, we immediately get (b) from the above inequality. Combining (a) and (b), Eq. \eqref{e:YEqn} has a unique solution in $S$ by Banach fixed point theorem.
\vskip 3mm

Let $Y. \in S$ be the solution obtained by the above Banach fixed point theorem, for every $\sigma \in [\frac 16, \frac 12]$ and $t \in (0,T]$, by \eqref{e:eAEst} and \eqref{e:NHEst} we have
\Bes
\begin{split}
\|A^\sigma Y_t\|_H & \le Ct^{-\sigma} \|x\|_H+C\int_0^t (t-s)^{-\sigma}\|N(Y_s+Z_s)\|_H ds \\
& \le Ct^{-\sigma} \|x\|_H+C\int_0^t (t-s)^{-\sigma} (\|A^{\frac 16} Y_s\|^3+K_1^3)ds \\
& \le Ct^{-\sigma} \|x\|_H+C\int_0^t (t-s)^{-\sigma} s^{-\frac 12}(B^3+K_1^3)ds.
\end{split}
\Ees
This inequality clearly imply the desired inequality.\\

Let us show the uniqueness. Let $u, v \in C([0,T];H)$ be two solutions satisfying the inequality, it follows from \eqref{e:NxyHEst1}
that for all $t \in [0,T]$,
\Bes
\begin{split}
\|u_t-v_t\|_H & \le \int_0^t \|N(u_s+Z_s)-N(v_s+Z_s)\|_H ds \\
& \le \int_0^t (1+K_1^2+\|A^{\frac 14}u_s\|^2_H+\|A^{\frac 14}v_s\|^2_H) \|u_s-v_s\|_H ds \\
& \le \int_0^t (1+K_1^2+C s^{-1/2}) \|u_s-v_s\|_H ds,
\end{split}
\Ees
 the above inequality implies $\|u_t-v_t\|_H=0$ for all $t \in [0,T]$.
\\

(ii). Let $0<\hat T \le 1$ be some constant to be determined later. For every $\sigma \in [\frac 16, \frac 12]$, define
$$S=\{u \in C([0,\hat T];\mcl D(A^{\sigma})): u_0=x, \sup_{0 \le t \le \hat T} \|A^{\sigma} u_t\|_H \le 1+\|A^{\sigma} x\|_H\}.$$
Given any $u, v \in S$, define
$d(u,v)=\sup_{0 \le t \le \hat T} \|A^{\sigma} (u_t-v_t)\|_H,$ then $(S, d)$ is a closed metric space.
\vskip 2mm
Define a map $\mcl F: S \rightarrow C([0,\hat T]; D(A^\sigma))$ as the following: for any $u \in S$,
$$(\mcl F u)_t=e^{-At} x+\int_0^t e^{-A(t-s)} N(u_s+Z_s)ds, \ \ \ \ 0 \le t \le \hat T.$$
By \eqref{e:eAEst} and \eqref{e:NHEst}, we have
\Bes
\begin{split}
\|A^{\sigma}(\mcl F u)_t\|_H
& \le \|A^{\sigma} x\|_H+C\int_0^t (t-s)^{-\sigma} (1+\|A^{\sigma} u_s\|_H^3+K^3_1) ds \\
& \le \|A^{\sigma} x\|_H+C\int_0^t (t-s)^{-\sigma} (1+\|A^{\sigma} x\|_H^3+K^3_1) ds
\end{split}
\Ees
Choosing $\hat T>0$ so small that $\|A^{\sigma}(\mcl F u)_t\|_H \le 1+\|A^{\sigma} x\|_H$ for all $0 \le t \le \hat T$, we get
$\mcl F: S \rightarrow S$ from the previous inequality.
\vskip 2mm
For all $u, v \in S$, as $\hat T>0$ is sufficiently small, by a similar calculation we have
$$\sup_{0 \le t \le \hat T} \|A^{\sigma}(\mcl F u)_t-A^{\sigma}(\mcl F v)_t\|_H \le \frac 12 \sup_{0 \le t \le \hat T} \|A^{\sigma}u_t-A^{\sigma}v_t\|_H,$$
i.e., $d(\mcl F u,\mcl F v) \le \frac 12 d(u,v)$.
By Banach fixed point theorem, we complete the proof of (ii).
\end{proof}
\ \ \ \
\begin{lem} \label{l:YGlExUn}
The following statements hold:
\begin{itemize}
\item[(a)] For every $x \in H$ and $\omega \notin N$, Eq. \eqref{e:YEqn} admits a unique global solution on
$Y.(\omega) \in C([0,\infty);H) \cap C((0,\infty);V)$.
\item[(b)] If $x \in V$, then $Y_{.}(\omega) \in C([0,\infty);V)$.
\end{itemize}
\end{lem}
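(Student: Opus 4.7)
The plan is to patch the local solutions from Lemma \ref{l:LocExUnH} together by means of an a priori $H$-energy estimate that rules out finite-time blow-up, and then to derive (b) from (a) by comparing the local $V$-solution with the global $H$-solution via uniqueness. Throughout I fix $\omega\notin N$ and drop it from the notation; I also abbreviate $X_t:=Y_t+Z_t$.

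For part (a), I would iterate Lemma \ref{l:LocExUnH}(i), restarting at small positive times, to obtain a maximal $H$-solution on some interval $[0,T^*)$. Since the local result provides $Y_t\in V$ for every $t>0$, the equation can be tested against $Y_t$ in $H$. Writing $N(u)=u^3-u$ and $Y_t=X_t-Z_t$ yields
\Bes
\tfrac{1}{2}\tfrac{d}{dt}\|Y_t\|_H^2+\|Y_t\|_V^2=-\|X_t\|_{L^4}^4+\langle X_t^3,Z_t\rangle_H+\|X_t\|_H^2-\langle X_t,Z_t\rangle_H.
\Ees
Young's inequality gives $|\langle X_t^3,Z_t\rangle_H|\le \tfrac{3}{4}\|X_t\|_{L^4}^4+\tfrac{1}{4}\|Z_t\|_{L^4}^4$, so the quartic terms combine to $-\tfrac{1}{4}\|X_t\|_{L^4}^4$ and may be dropped. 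The $H$-inner products are absorbed into $\|Y_t\|_H^2$ and $\|Z_t\|_H^2$, and the one-dimensional embedding $V\hookrightarrow L^\infty$ gives $\|Z_t\|_{L^4}^4\le CK_T^4$. The resulting differential inequality
\Bes
\tfrac{d}{dt}\|Y_t\|_H^2\le C\|Y_t\|_H^2+C(1+K_T^4)
\Ees
together with Gronwall implies $\sup_{0\le t\le T}\|Y_t\|_H<\infty$ for every $T<\infty$. Because the local existence time in Lemma \ref{l:LocExUnH}(i) depends only on the initial $H$-norm and on $K_1$, this forces $T^*=\infty$, and the built-in smoothing then gives $Y\in C([0,\infty);H)\cap C((0,\infty);V)$.

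For part (b), when $x\in V$, Lemma \ref{l:LocExUnH}(ii) with $\sigma=\tfrac{1}{2}$ yields a $V$-continuous local solution $\tl Y$ on some $[0,\hat T(\omega)]$. Viewed in $H$, $\tl Y$ satisfies the smoothing bound of Lemma \ref{l:LocExUnH}(i) trivially, since $\|\tl Y_t\|_V$ is uniformly bounded, so by the uniqueness clause there it coincides with $Y$ on $[0,\hat T(\omega)]$. Together with $Y\in C((0,\infty);V)$ from (a), this promotes $Y$ to $C([0,\infty);V)$.

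The subtlest point I anticipate is the rigorous justification of the energy identity near $t=0$: the mild solution is only $H$-continuous at the origin while $V$-regular on $(0,T]$, and $\p_t Y_t$ need not exist in $H$ at every point because of the jumps of $Z$. I would resolve this by establishing the integrated energy identity on $[\delta,T]$, using that $Y\in L^2((\delta,T);V)$ with $\p_t Y\in L^2((\delta,T);V^*)$ together with the standard duality chain rule, and then letting $\delta\downarrow 0$ via the $H$-continuity of $t\mapsto\|Y_t\|_H^2$ at the origin. A Galerkin truncation in $H$ would furnish a regularity-agnostic alternative that converges by the same bounds.
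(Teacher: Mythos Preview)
Your proposal is correct and follows essentially the same route as the paper: derive an $H$-energy a priori bound by testing \eqref{e:YEqn} against $Y_t$ and estimating $\langle N(Y_t+Z_t),Y_t\rangle_H$ via Young's inequality, then combine with the local results of Lemma \ref{l:LocExUnH} to globalize; part (b) is handled identically by matching the local $V$-solution with the global one via uniqueness.

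The only noteworthy difference is cosmetic but worth mentioning. The paper organizes the nonlinear estimate as the standalone inequality $\langle -N(u+v),u\rangle_H\le \tfrac{3}{2}\|u\|_H^2+\tfrac{1}{2}\|v\|_H^2+C\|v\|_V^4$ and then invokes Poincar\'e ($\|u\|_H\le \tfrac{1}{2\pi}\|u\|_V$) to obtain the sharper bound
\[
\|Y_t\|_H^2\le e^{-(2\pi-3)t}\|x\|_H^2+\int_0^t e^{-(2\pi-3)(t-s)}\big(\|Z_s\|_H^2+C\|Z_s\|_V^4\big)\,ds,
\]
which carries exponential decay in the initial datum. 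Your Gronwall bound $\sup_{t\le T}\|Y_t\|_H<\infty$ is entirely sufficient for the present lemma, but the decaying form is reused verbatim in the proof of Theorem \ref{t:Erg} (accessibility to $0$), so if you are writing the whole paper you would want the paper's version. Your explicit discussion of how to justify the energy identity near $t=0$ (integrate on $[\delta,T]$ via the $V$--$V^*$ duality chain rule, then let $\delta\downarrow 0$) is a point the paper passes over silently.
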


\begin{proof}
For national simplicity, we shall omit the variable $\omega$ in the proof. Thanks to \eqref{e:KT}, \eqref{e:KTFin}
and Lemma \ref{l:LocExUnH},
to get a global solution, it suffices to show the following a'priori estimate:
\Be \label{e:PriEst}
\|Y_t\|_H^2 \le e^{-(2 \pi-3)t} \|x\|^2_H+\int_0^t e^{-(2\pi-3)(t-s)} (\|Z_s\|^2_H+C\|Z_s\|_V^4) ds.
\Ee

To this end, let us first show the following auxiliary inequality:
\Be \label{e:NuvuH}
\Ll -N(u+v), u\Rr_H \le \frac 32\|u\|_H^2+\frac 12\|v\|_H^2+C\|v\|_V^4.
\Ee
In fact, it follows from the following Young's inequalities: for $a,b \ge 0$,
$$ab \le \frac {a^2} 2+\frac {b^2} 2, \ \ \  ab \le \frac{a^4}{4}+\frac{3b^{\frac 43}}{4},$$
that
\Bes
\begin{split}
\Ll -N(u+v), u\Rr_H&=\int_\T |u(\xi)|^2 d\xi+\int_\T u(\xi)v(\xi)d\xi-\int_\T |u(\xi)|^4 d\xi\\
&\quad -3\int_\T u^3(\xi)v(\xi) d\xi
 -3 \int_\T u^2(\xi)v^2(\xi) d\xi-\int_\T u(\xi)v^3(\xi) d\xi \\
& \le \|u\|_H^2+\frac 12\|u\|_H^2+\frac 12\|v\|_H^2-\|u^2\|^2_H+\|u^2\|_H^2+C\|v^2\|_H^2 \\
&=\frac 32\|u\|_H^2+\frac 12\|v\|_H^2+C\|v^2\|_H^2.
\end{split}
\Ees
This, together with Sobolev embedding $\|v^2\|^2_H \le C\|v\|_V^4$, immediately implies \eqref{e:NuvuH}.
\vskip 2mm
It follows from \eqref{e:NuvuH} and Poincare inequality $\|x\|_H \le \frac {1}{2 \pi}\|x\|_V$ that
\Bes
\begin{split}
\|Y_t\|_H^2&=\|x\|_H^2-2\int_0^t \|Y_s\|_V^2 ds-2\int_0^t \Ll N(Y_s+Z_s), Y_s\Rr_H ds \\
& \le \|x\|_H^2-(2\pi-3) \int_0^t \|Y_s\|^2_H ds+\int_0^t \|Z_s\|^2_H ds+C \int_0^t \|Z_s\|_V^4 ds.
\end{split}
\Ees
This implies \eqref{e:PriEst} immediately.
\vskip 2mm

It follows from Lemma \ref{l:LocExUnH} that
Eq. \eqref{e:YEqn} admits a unique local solution $Y. \in C([0,T];H) \cap C((0,T];V)$ for some $T>0$.
Thanks to (ii) of Lemma \ref{l:LocExUnH} and \eqref{e:PriEst}, we can extend this solution $Y. \in C([0,T];H) \cap C((0,T];V)$
to be $Y. \in C([0,\infty);H) \cap C((0,\infty);V)$.
\vskip 2mm

We now prove the new $Y. \in C([0,\infty);H) \cap C((0,\infty);V)$ is unique. Suppose there are two solutions $Y.^1, Y.^2 \in C([0,\infty);H) \cap C((0,\infty);V)$. Thanks to the uniqueness of $Y_.$ on $[0,T]$, we have
$Y^1_T=Y^2_T$. For any $T_0>T$, it follows from the continuity that
\Bes
\sup_{T \le t \le T_0} \|Y^1_t\| \le \hat C, \ \ \ \sup_{T \le t \le T_0} \|Y^2_t\| \le \hat C,
\Ees
where $\hat C>0$ depends on $T_0,Y^1,Y^2$ and $\omega$. Hence, for all $t \in [T,T_0]$, by \eqref{e:eAEst}, \eqref{e:PoiInq} and \eqref{e:NxyHEst1},
\Bes
\begin{split}
\|Y^1_t-Y^2_t\|_V & \le \int_{T}^t (t-s)^{-1/2}\|N(Y^1_s+Z_s)-N(Y^2_s+Z_s)\|_H ds \\
& \le C\int_{T}^t (t-s)^{-1/2} (1+K_1^2+\|A^{\frac 14}Y^1_s\|^2_H+\|A^{\frac 14}Y^2_s\|^2_H) \|Y^1_s-Y^2_s\|_H ds \\
& \le C \hat K \int_{T}^t (t-s)^{-1/2}  \|Y^1_s-Y^2_s\|_V ds,
\end{split}
\Ees
where $\hat K:=(1+K^2_1+2 \hat C^2)$.
This immediately implies $Y^1_t=Y^2_t$ for all $t \in [T,T_0]$. Since $T_0$ is arbitrary,
we get the uniqueness of the solution $Y_. \in C([0,\infty);H) \cap C((0,\infty);V)$.
\vskip 2mm
If $x \in V$, it follows from (a) that Eq. \eqref{e:YEqn} admits a unique solution $Y. \in C([0,\infty);H) \cap C((0,\infty);V)$.
By (ii) of Lemma \ref{l:LocExUnH}, Eq. \eqref{e:YEqn} admits a unique solution $\hat Y. \in C([0,\hat T];V)$ for some $\hat T>0$.
Since $C([0,\hat T];V)$ is a subset of $C([0,\hat T];H) \cap C((0,\hat T];V)$, $Y_t=\hat Y_t$ for all $t \in [0, \hat T]$. Hence, $Y. \in C([0,\infty);V)$.
\end{proof}
\ \ \ \ \
\begin{proof} [Proof of Theorem \ref{t:MaiThm}]
Let us study Eqs. \eqref{e:XEqn} and \eqref{e:YEqn} for $\omega \notin N$, where $N \subset \Omega$ is a negligible set defined in \eqref{e:KTFin}.
Since $Z.(\omega) \in D([0,\infty);V)$, it is of course
$Z.(\omega) \in D([0,\infty);H)$. By (a) of Lemma \ref{l:YGlExUn}, $X.(\omega)=Y.(\omega)+Z.(\omega)$ is the unique solution to Eq. \eqref{e:XEqn} in $D([0,\infty);H)\cap D((0,\infty);V)$.
The Markov property follows from the uniqueness immediately. (3) immediately follows from \eqref{e:KTFin} and (b) of Lemma \ref{l:YGlExUn}.
\end{proof}
\ \ \

\section{Proof of Theorem \ref{t:Inv}}
To show the existence of invariant measures, we follow the method in \cite{DXZ11}. To this end, let us consider the Galerkin approximation of
Eq. \eqref{e:XEqn}.

Recall that $\{e_k\}_{k \in \Z_{*}}$
is an orthonormal basis of $H$, define
$$H_m:={\rm span}\{e_k; |k| \le m\}$$
equipped with the norm  adopted from $H$. It is clear that $H_m$ is a
finite dimensional Hilbert space. Given any $m>0$, let $\pi_m: H \To H_m$ be the projection
from $H$ to $H_m$.

It is well known that for all fixed $m \in \N$, $H_m$ and $V_m$ are equivalent since we have
$$C_1 \|x\|_H \le \|x\|_V  \le C_2 \|x\|_H \ \ \ \ \ \forall \ x \in H_m,$$
where $C_1, C_2$ are both only depends on $m$.
\vskip 3mm

The Galerkin approximation of \eqref{e:XEqn} is as the following:
\Be \label{e:GalEqn}
d X^m_t+[A X^m_t+N^m(X^m_t)]dt=dL^m_t, \ \ \ X^m_0=x^m,
\Ee
where $X^m_t=\pi_m X_t$, $N^m(X^m_t)=\pi_m[N(X^m_t)]$,
$L^m_t=\sum_{|k| \le m} \beta_k l_k(t) e_k$.
Eq. \eqref{e:GalEqn} is a dynamics evolving in $H_m$.

\begin{thm} \label{t:GalCon}
The following statements hold:
\begin{enumerate}
\item[(i)] For every $x \in W$ with $W=H,V$ and $\omega \in \Omega$ a.s., Eq. \eqref{e:GalEqn} has a unique mild solution $X^{m,x^m}_.(\omega) \in D([0,\infty),W_m)$ such that
$$\sup_{0 \le t \le T} \|X^{m, x^m}_t(\omega)\|_W \le C, \ \ \ \ \ T>0,$$
where $C$ depends on $\|x\|_W, T$ and $K_T(\omega)$.
\item[(ii)] For every $x \in W$ with $W=H,V$ and $\omega \in \Omega$ a.s., we have
\Bes
\lim_{m \rightarrow \infty} \|X^{m,x^m}_t(\omega)-X^x_t(\omega)\|_W=0, \ \ \ \ \ \ \ t \ge 0.
\Ees
\end{enumerate}
\end{thm}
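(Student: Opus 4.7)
The plan is to argue pathwise for $\omega \notin N$, where $N$ is the negligible set from \eqref{e:KTFin}, and to exploit that Eq.~\eqref{e:GalEqn} is a random ODE on the finite-dimensional space $H_m$ driven by the c\`{a}dl\`{a}g process $L^m$. Setting $Z^m_t := \pi_m Z_t$ and $Y^m_t := X^m_t - Z^m_t$, we get
\begin{equation*}
\p_t Y^m_t + A Y^m_t + N^m(Y^m_t + Z^m_t) = 0, \qquad Y^m_0 = x^m,
\end{equation*}
which has c\`{a}dl\`{a}g time dependence through $Z^m$ but smooth dependence on $Y^m$. Since $H_m$ is finite dimensional, $u \mapsto N^m(u + Z^m_t)$ is locally Lipschitz, so classical ODE theory yields a unique local solution, and then $X^m = Y^m + Z^m \in D([0,\infty);H_m)$.

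For part (i), the global existence and the uniform bound will follow from the same energy identity as in Lemma~\ref{l:YGlExUn}. Taking the $H$-inner product of the ODE with $Y^m_t$, using $\langle N^m(u), u\rangle_H = \langle N(u), u\rangle_H$ for $u \in H_m$ (since $\pi_m$ is an orthogonal projection), and invoking the auxiliary estimate \eqref{e:NuvuH}, we obtain
\begin{equation*}
\|Y^m_t\|_H^2 \le e^{-(2\pi - 3)t}\|x^m\|_H^2 + \int_0^t e^{-(2\pi - 3)(t - s)}\bigl(\|Z^m_s\|_H^2 + C\|Z^m_s\|_V^4\bigr)\, ds,
\end{equation*}
and since $\|Z^m_s\|_V \le \|Z_s\|_V \le K_T(\omega)$ this rules out blow-up and settles the case $W = H$. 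For $W = V$ with $x \in V$ we invoke Lemma~\ref{l:LocExUnH}(ii) in the Galerkin setting (its proof goes through verbatim, as $\pi_m$ does not enlarge any norm appearing in \eqref{e:PoiInq}--\eqref{e:NHEst}) to obtain a short-time $V$-bound, and then bootstrap from the global $H$-bound via the mild form and \eqref{e:NHEst} to propagate the $V$-bound up to any $T > 0$, with constants depending only on $\|x\|_V$, $T$, and $K_T(\omega)$.

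For part (ii), decompose $X^{m,x^m}_t - X^x_t = (Y^{m,x^m}_t - Y^x_t) + (Z^m_t - Z_t)$. The first ingredient is the pathwise convergence $\sup_{0 \le t \le T}\|Z^m_t - Z_t\|_V \to 0$: applying the integration-by-parts identity \eqref{e:IntByPar} to both $Z$ and $Z^m$ rewrites the difference as $(\pi_m - \Id)L_t - \int_0^t A e^{-A(t-s)}(\pi_m - \Id)L_s\, ds$, and Lemma~\ref{l:ZEst} combined with $\pi_m \to \Id$ strongly on $V$ handles this through dominated convergence. The second ingredient is an energy estimate for $W^m := Y^{m,x^m} - Y^x$: subtracting the two $Y$-equations, adding and subtracting $N^m(Y^x + Z)$, and testing in $H$, using \eqref{e:NxyHEst1} and the uniform $V$-bounds from part (i) and Theorem~\ref{t:MaiThm}(3), we obtain
\begin{equation*}
\frac{d}{dt}\|W^m_t\|_H^2 \le C_T(\omega)\bigl(\|W^m_t\|_H^2 + \|Z^m_t - Z_t\|_V^2 + \|(\Id - \pi_m) N(Y^x_t + Z_t)\|_H^2\bigr).
\end{equation*}
Gronwall's lemma together with dominated convergence yields $W^m \to 0$ in $C([0,T];H)$, and a second pass using \eqref{e:NxyHEst} with $\sigma = 1/2$ applied to the mild form upgrades this to convergence in $V$.

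The principal obstacle is the $V$-convergence $Z^m \to Z$: unlike the Wiener case one cannot use factorization, and $L_t$ itself does not lie in $V$. Lemma~\ref{l:ZEst} provides exactly the regularity needed, and it is crucial that the integration-by-parts representation \eqref{e:IntByPar}--\eqref{e:YtInt} transfers the roughness of $L$ onto a time integral whose integrand sits in a higher-regularity space; once the first pass secures $H$-convergence, the Sobolev embedding $V \hookrightarrow L^\infty$ on the one-dimensional torus combined with the explicit cubic form of $N$ keeps all nonlinear error terms controlled in the subsequent $V$-bootstrap.
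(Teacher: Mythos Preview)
Your argument for part (i) matches the paper's: both simply repeat the proof of Theorem~\ref{t:MaiThm} (i.e., Lemmas~\ref{l:LocExUnH} and~\ref{l:YGlExUn}) in the Galerkin setting, observing that $\pi_m$ does not spoil any of the estimates \eqref{e:PoiInq}--\eqref{e:NHEst}.

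For part (ii) your route is genuinely different from the paper's. The paper does not split into $Y$- and $Z$-pieces; it works directly with the mild form of $X^m_t-X_t$ in the $V$-norm and decomposes it into four terms
\[
I_1=e^{-At}(x^m-x),\quad I_2=Z_t-Z^m_t,\quad I_3=\int_0^t e^{-A(t-s)}(I-\pi_m)N(X_s)\,ds,\quad I_4=\int_0^t e^{-A(t-s)}\bigl[N^m(X^m_s)-N^m(X_s)\bigr]ds.
\]
Using the uniform $V$-bounds \eqref{e:UniBouXmX}, the terms $I_1,I_2,I_3$ tend to zero by inspection or dominated convergence, while $I_4$ is estimated via \eqref{e:NxyHEst} and H\"older, and the loop is closed by taking $\limsup_m$ and invoking Fatou inside the time integral. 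Your approach instead runs an energy/Gronwall argument on $W^m=Y^{m}-Y$ in $H$ first and then bootstraps to $V$ through the mild form. Both are correct; yours mirrors the structure of Section~\ref{s:Y} and yields uniform-in-$t$ convergence on $[0,T]$ in $H$, while the paper's single-pass mild-form argument is shorter and already takes place in the target norm $V$.

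Two small remarks. First, your claim that ``$L_t$ itself does not lie in $V$'' is not right under the standing assumption $\beta>\tfrac12+\tfrac1{2\alpha}$: Step~1 of the proof of Lemma~\ref{l:ZEst} with $\theta=\tfrac12$ shows $\E\|A^{1/2}L_t\|_H^p<\infty$, hence $L_t\in V$ a.s.; in particular $Z^m_t=\pi_m Z_t\to Z_t$ in $V$ for each fixed $t$ is immediate from $Z_t\in V$ (Lemma~\ref{l2.2}) without any integration by parts. Second, in your $V$-bootstrap you cite \eqref{e:NxyHEst} with $\sigma=1/2$, which puts a $V$-norm on the right and forces another Gronwall loop; if you want to feed in the $H$-convergence you already have, use \eqref{e:NxyHEst1} instead, together with the uniform $V$-bounds on $X$ and $X^m$ to control the prefactor.
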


\begin{proof}
We omit the variable $\omega$ for the notational simplicity in the proof.
By the same method as proving Theorem \ref{t:MaiThm}, we can show (i). It remains to show (ii). Since the two cases $W=H$ and $W=V$ can be shown by the same method, we only prove the case $W=V$.
\vskip 2mm

As $t=0$, (ii) is obvious. For $t>0$, by (3) of Theorem \ref{t:MaiThm} and (i) we have
\Be \label{e:UniBouXmX}
\sup_{0 \le s \le t} \|X_s\|_V \le \hat C, \ \ \ \ \ \sup_{0 \le s \le t} \|X^m_s\|_V \le \hat C,
\Ee
where $\hat C>0$ depends on $\|x\|_V, t$ and $K_t$.
Observe
\Be
X^m_t-X_t=I_1(t)+I_2(t)+I_3(t)+I_4(t),
\Ee
where $I_1(t):=e^{-At}(x^m-x)$, $I_2(t):=Z_t-Z^m_t$,
\Bes
\begin{split}
& I_3(t):=\int_0^t e^{-A(t-s)} (I-\pi_m) N(X_s)ds,\\
& I_4(t):=\int_0^t e^{-A(t-s)} \left[N^m(X^m_s)-N^m(X_s)\right]ds.
\end{split}
\Ees
It is clear that as $m \rightarrow \infty$,
$$\|I_1(t)\|_V \rightarrow 0, \ \ \ \|I_2(t)\|_V \rightarrow 0.$$

\noindent
By \eqref{e:NHEst}, we have
\Be
\|(I-\pi_m)N(X_s)\|_H \rightarrow 0.
\Ee
This, together with \eqref{e:eAEst} and dominated convergence theorem, implies
that as $m \rightarrow \infty$,
\Bes
\begin{split}
\|I_3(t)\|_V &  \le C\int_0^t (t-s)^{-\frac 12} \|(I-\pi_m) N(X_s)\|_H ds \longrightarrow 0.
\end{split}
\Ees
It remains to estimate $I_4(t)$. By \eqref{e:eAEst}, \eqref{e:NHEst} and \eqref{e:UniBouXmX},
\Bes
\begin{split}
\|I_4(t)\|_V
& \le C\int_0^t (t-s)^{-\frac 12} \|N^m(X_s)-N^m(X^m_s)\|_H ds \\
& \le C \int_0^t (t-s)^{-\frac 12}\|N(X_s)-N(X^m_s)\|_H ds \\
& \le C \tl K \int_0^t (t-s)^{-\frac 12} \|X_s-X^m_s\|_V ds \\
& \le C \tl K \left(\int_0^t (t-s)^{-\frac p2} ds\right)^{\frac 1p} \left(\int_0^t \|X_s-X^m_s\|^q_V ds\right)^{\frac 1q},
\end{split}
\Ees
where $\tl K=\sup_{0 \le s \le T,m} (2+\|X_s\|^2_V+\|X^m_s\|^2_V) \le 2+2 \hat C^2$ and $\frac 1p+\frac 1q=1$ with $1<p<2$.
\vskip 3mm

Collecting the estimates for $I_1(t), ..., I_4(t)$, we get
\Bes
\begin{split}
\lim_{m \rightarrow \infty} \sup_m \|X_t-X^m_t\|_V
& \le  C \tl K t^{\frac 1p-\frac 12} \lim_{m \rightarrow \infty} \sup_m\left(\int_0^t \|X_s-X^m_s\|^q_V ds\right)^{\frac 1q} \\
& \le C \tl K t^{\frac 1p-\frac 12}  \left(\int_0^t \lim_{m \rightarrow \infty}  \sup_m\|X_s-X^m_s\|^q_V ds\right)^{\frac 1q}  \\
\end{split}
\Ees
where the last inequality is thanks to the fact $\sup_{0 \le s \le t} \|X_s-X^m_s\|_V \le 2\hat C$
and Fatou's theorem.
The above inequality implies
$$\lim_{m \rightarrow \infty} \sup_m \|X_t-X^m_t\|_V=0.$$
\end{proof}
\ \ \

Before proving Theorem \ref{t:Inv}, let us have a fast review about purely jump L\'evy processes as following.
Let $\{(l_j(t))_{t\geq 0},j\in \Z_*\}$ be a sequence of independent one dimensional purely jump L\'evy processes
with the same characteristic function, i.e.,
$$
\E e^{\mathrm{i}\xi l_j(t)}=e^{-t\psi(\xi)},\ \forall t\geq 0, j \in \Z_*,
$$
where $\psi(\xi)$ is a complex valued function called L\'evy symbol given by
$$
\psi(\xi)=\int_{\R \setminus\{0\}}(e^{\mathrm{i}\xi y}-1-\mathrm{i}\xi y1_{|y|\leq 1})\nu(dy),
$$
where $\nu$ is the L\'evy measure and satisfies that
$$
\int_{\R\setminus\{0\}}1\wedge |y|^2\nu(dy)<+\infty.
$$
For $t>0$ and $\Gamma\in \mcl B(\R\setminus\{0\})$,
the Poisson random measure associated with $l_j(t)$ is defined by
$$
N^{(j)}(t,\Gamma):=\sum_{s\in(0,t]}1_{\Gamma}(l_j(s)-l_j(s-)).
$$
The compensated Poisson random measure is given by
$$
\tilde N^{(j)}(t,\Gamma)=N^{(j)}(t,\Gamma)-t\nu(\Gamma).
$$
By L\'evy-It\^o's decomposition (cf. \cite[p.108, Theorem 2.4.16]{Ap04}), one has
$$
l_j(t)=\int_{|x|\leq 1}x\tilde N^{(j)}(t,dx)+\int_{|x|>1}x N^{(j)}(t,dx).
$$
\ \ \

\begin{proof} [Proof of Theorem \ref{t:Inv}] We follow the argument in \cite[(3.6)]{DXZ11}.
Write
$$
f(u):=(\|u\|^2_H+1)^{1/2},\ \ u \in H_m,
$$
it follows from It\^{o} formula (\cite{Ap04} or \cite{DXZ11}) that
$$f(X^m_t)=:f(x^m)-I^m_1(t)+I^m_2(t)+I^m_3(t)+I^m_4(t),$$
where
\begin{align*}
& I^m_1(t):=\int^t_0\frac{\|X^m_s\|^2_V}{(\|X^m_s\|^2_H+1)^{\frac 12}} ds+\int^t_0\frac{\langle X^m_s, N(X^m_s)\rangle_H}{(\|X^m_s\|^2_H+1)^{\frac 12}} ds, \\
& I^m_2(t):=\sum_{|j| \le m}\int^t_0\!\!\!\int_{|y|\leq 1}[f(X^m_s+y\beta_j e_j)-f(X^m_s)]\tilde N^{(j)}(ds,dy),\\
& I^m_3(t):=\sum_{|j| \le m}\int^t_0\!\!\!\int_{|y|\leq 1}\left[f(X^m_s+y\beta_j e_j)-f(X^m_s)
-\frac{\langle X^m_s,y\beta_je_j \rangle_0}{(\|X^m_s\|_H^2+1)^{\frac 12}}\right]\nu(dy) ds,\\
& I^m_4(t):=\sum_{|j| \le m}\int^t_0\!\!\!\int_{|y|> 1}\left[f_n(X^m_s+y\beta_j e_j)-f_n(X^m_s)\right]N^{(j)}(ds,dy).
\end{align*}
It follows from \eqref{e:NInnPro} that
\Bes
I^m_1(t) \ge \int^t_0\frac{\|X^m_s\|^2_V}{(\|X^m_s\|^2_H+1)^{\frac 12}} ds-\frac{t} 4.
\Ees
We apply the same argument as in \cite{DXZ11} to $I^m_2,...,I^m_4$ and get
\Bes
\begin{split}
& \E[\sup_{0 \le t \le T} |I^m_2(t)|] \le CT^{1/2}, \\
& \E[\sup_{0 \le t \le T} |I^m_3(t)|] \le CT, \\
& \E[\sup_{0 \le t \le T} |I^m_4(t)|] \le CT,
\end{split}
\Ees
where $C$ is some constant depending on $\alpha, \beta$ and $T>0$ is arbitrary.

Collecting the estimates about $I^m_1,...,I^m_4$, we immediately get
\Bes
\begin{split}
& \ \ \ \E\big[\sup_{t\in[0,T]}(\|X^m_t\|^2_H+1)^{\frac 12}\big]
+\E\int^T_0\frac{\|X^m_s\|^2_V}{(\|X^m_s\|^2_H+1)^{\frac12}}ds \\
& \leq (\|x\|^2_H+1)^{\frac 12}+CT+CT^{\frac 12}.
\end{split}
\Ees
It follows from Theorem \ref{t:GalCon} that for all $t>0$,
$$\lim_{m \rightarrow \infty} \|X^m_t\|_H=\|X_t\|_H \ \ \ a.s.,$$
$$\lim_{m \rightarrow \infty} \|X^m_t\|_V=\|X_t\|_V \ \ \ a.s..$$
By Fatou's Lemma, we have
\Bes
\begin{split}
& \ \ \ \E\big[\sup_{t\in[0,T]}(\|X_t\|^2_H+1)^{\frac 12}\big]
+\E\int^T_0\frac{\|X_s\|^2_V}{(\|X_s\|^2_H+1)^{\frac 12}}ds \leq (\|x\|^2_H+1)^{\frac 12}+CT+CT^{\frac 12}.
\end{split}
\Ees
This easily implies
\Bes
\begin{split}
\E\left(\int^T_0\|X_s\|_V ds\right)&\leq
\E\left(\int^T_0\frac{\|X_s\|_V(\|X_s\|_H+1)}{(\|X_s\|^2_H+1)^{\frac 12}} ds\right)\\
&\leq C \E\left(\int^T_0\frac{\|X_s\|^2_V+1}{(\|X_s\|^2_H+1)^{\frac 12}} ds\right)\\
&\leq C(1+\|x\|_H+T).
\end{split}
\Ees
This, together with the classical Bogoliubov-Krylov's argument, implies the existence of invariant measures
and that the support of invariant measures is $V$.
\end{proof}
\ \ \ \ \

\section{Strong Feller property}
For all $f \in B_b(H)$, define
$$P_t f(x)=\E[f(X^x_t)],$$
for all $t \ge 0$ and $x \in H$. By Theorem \ref{t:MaiThm}, $(P_t)_{t \ge 0}$ is a Markov semigroup on $B_b(H)$.
The main result of this section is
\begin{thm} \label{t:StrFelH}
$(P_t)_{t \ge 0}$, as a semigroup on $B_b(H)$, is strong Feller.
\end{thm}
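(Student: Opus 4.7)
The plan is to combine truncation of the cubic nonlinearity with a localization argument based on the Priola--Zabczyk gradient estimate. Fix a smooth cutoff $\chi\colon[0,\infty)\to[0,1]$ with $\chi\equiv 1$ on $[0,1]$ and $\chi\equiv 0$ on $[2,\infty)$, and for each $R>0$ define
\[
N_R(u):=\chi\bigl(\|A^{1/4}u\|_H/R\bigr)\,N(u),\qquad u\in H,
\]
extended by $0$ on $H\setminus\mathcal{D}(A^{1/4})$. By \eqref{e:NHEst} with $\sigma=1/4$ and \eqref{e:NxyHEst1}, $N_R$ is bounded and globally Lipschitz from $H$ to $H$ with constants depending on $R$, so a standard fixed point argument (analogous to Lemma \ref{l:LocExUnH} but with a Lipschitz nonlinearity) produces a Markov semigroup $P_t^R$ on $B_b(H)$ associated with the mild solution of $dX_t^R+[AX_t^R+N_R(X_t^R)]dt=dL_t$, $X_0^R=x$.

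Since $N_R$ is globally Lipschitz $H\to H$, the gradient estimate of Priola--Zabczyk \cite{PZ11} applies to the truncated equation and yields, for every $t>0$, $R>0$, $f\in B_b(H)$ and $x,y\in H$,
\[
|P_t^R f(x)-P_t^R f(y)|\le C_{t,R}\|f\|_\infty\|x-y\|_H,
\]
so each $P_t^R$ is strong Feller on $B_b(H)$. To transfer this to $P_t$ at points $x\in\mathcal{D}(A^{1/4})$, I would introduce the stopping time $\tau_R^x:=\inf\{s\ge 0:\|A^{1/4}X_s^x\|_H\ge R\}$: pathwise uniqueness forces $X_s^x=X_s^{R,x}$ on $[0,\tau_R^x)$, so for $\|A^{1/4}x\|_H<R$,
\[
|P_tf(x)-P_tf(y)|\le|P_t^Rf(x)-P_t^Rf(y)|+2\|f\|_\infty\bigl[\PP(\tau_R^x\le t)+\PP(\tau_R^y\le t)\bigr].
\]
The first term vanishes as $y\to x$ in $H$ by the strong Feller property of $P_t^R$, while the last two terms will be controlled by sending $R\to\infty$.

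The main obstacle is twofold. First, one needs $\lim_{R\to\infty}\sup_{x\in B}\PP(\tau_R^x\le t)=0$ uniformly over bounded sets $B\subset\mathcal{D}(A^{1/4})$; I would obtain this from the decomposition $X^x=Y^x+Z$ of Section \ref{s:Y}, combining Lemma \ref{l:ZEst} with $\theta=1/4$ (admissible since $\beta-1/(2\alpha)>1/2>1/4$) to control $\sup_{0\le s\le t}\|A^{1/4}Z_s\|_H$ in $L^p$ for some $p\in(1,\alpha)$, and the $A^{1/4}$-smoothing bound of Lemma \ref{l:LocExUnH}(i) bootstrapped by the global a priori estimate \eqref{e:PriEst} to control $\sup_{0\le s\le t}\|A^{1/4}Y_s^x\|_H$ pathwise; a Chebyshev inequality then closes this step. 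Second, one must extend strong Feller from $\mathcal{D}(A^{1/4})$ to all of $H$, since $\tau_R^x$ collapses to $0$ whenever $x\notin\mathcal{D}(A^{1/4})$. For this I would invoke the Markov identity $P_tf(x)=\E\bigl[(P_{t/2}f)(X_{t/2}^x)\bigr]$: since $X_{t/2}^x\in V\subset\mathcal{D}(A^{1/4})$ a.s., and the map $x\mapsto X_{t/2}^x$ is continuous $H\to H$ in probability (by the mild formula, \eqref{e:NxyHEst1} and a Gronwall-type argument on a truncated event), dominated convergence together with the already-established continuity of $P_{t/2}f$ on $\mathcal{D}(A^{1/4})$ yields the continuity of $P_tf$ at every $x_0\in H$.
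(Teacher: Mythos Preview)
There is a genuine gap: you cannot apply the Priola--Zabczyk gradient estimate directly in $H$. That estimate (Hypothesis (5.2) in \cite{PZ11}) requires the noise coefficients, measured in the working space, to satisfy $|\beta_k|\ge C\gamma_k^{-(\theta-1/\alpha)}$ for some $\theta\in[1/\alpha,1)$; equivalently, $|\beta_k|$ must decay no faster than $\gamma_k^{-(1-1/\alpha)}$. But the standing assumption $|\beta_k|\sim\gamma_k^{-\beta}$ with $\beta>\tfrac12+\tfrac1{2\alpha}$ gives, for every $\alpha\in(3/2,2)$,
\[
\beta>\tfrac12+\tfrac1{2\alpha}>\tfrac34>\tfrac12>1-\tfrac1\alpha,
\]
so no admissible $\theta<1$ exists. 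In $H$ the noise is simply too weak to produce the required smoothing of the Ornstein--Uhlenbeck semigroup; this is exactly why the paper passes to $V$, where the effective coefficients become $\gamma_k^{1/2}\beta_k$ and the condition $\beta<\tfrac32-\tfrac1\alpha$ then does the job (Proposition \ref{p:DPTru}). A secondary problem: your truncation $N_R(u)=\chi(\|A^{1/4}u\|_H/R)N(u)$ is not Lipschitz $H\to H$, since the cutoff factor is Lipschitz only in $\|A^{1/4}(u-v)\|_H$, not in $\|u-v\|_H$. That could perhaps be repaired by a different truncation, but the noise obstruction cannot.

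The paper instead first proves strong Feller on $B_b(V)$ (Theorem \ref{t:StrFel}) via truncation in the $V$-norm and the gradient estimate there, and then lifts this to $B_b(H)$ by the Markov identity $P_tf(x)=\E\bigl[P_{t-s}f(X_s^x)\bigr]$ combined with the pathwise $H\to V$ smoothing bound $\|X_s^x-X_s^y\|_V\le Cs^{-1/2}\|x-y\|_H$ on the high-probability event $\{\sup_{0\le r\le T_0}\|Z_r\|_V\le N\}$ (derived from Lemma \ref{l:LocExUnH}(i)), with the complementary event controlled by Lemma \ref{l:ZEst}. Your final regularization step via the Markov property is the right instinct, but it has to be fed by strong Feller on $V$, not by an $H$-level gradient estimate that is unavailable under the paper's hypotheses.
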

\noindent To prove this theorem, we need to use the following theorem which will be proven later.
\begin{thm} \label{t:StrFel}
$(P_t)_{t \ge 0}$, as a semigroup on $B_b(V)$, is strong Feller.
\end{thm}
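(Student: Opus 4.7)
\textbf{Proof plan for Theorem \ref{t:StrFel}.} The strategy is the standard truncation/localization scheme: kill the cubic nonlinearity outside a large ball in $V$, apply the gradient estimate for stable-driven SPDEs with Lipschitz drift from \cite{PZ11, PSXZ11} to the truncated semigroup, and remove the truncation via a stopping-time argument. The main obstacle will be obtaining a bound on the exit-time probability that is uniform for $x$ ranging over a bounded set of $V$.

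\textbf{Step 1 (truncation).} Fix $R>0$ and a smooth cutoff $\chi_R:V\to[0,1]$ with $\chi_R\equiv 1$ on $\{\|x\|_V\le R\}$ and $\chi_R\equiv 0$ on $\{\|x\|_V\ge R+1\}$, and set $N_R(x):=\chi_R(x)N(x)$. By \eqref{e:NVEst} and smoothness of $N$ on bounded sets of $V$, $N_R:V\to V$ is bounded and globally Lipschitz. By a contraction argument in $C([0,T];V)$ (parallel to (ii) of Lemma \ref{l:LocExUnH}, but now global since $N_R$ is Lipschitz), the truncated SPDE
\Be \label{e:XR}
dX^R_t+[AX^R_t+N_R(X^R_t)]dt=dL_t,\qquad X^R_0=x\in V,
\Ee
has a unique global mild solution in $D([0,\infty);V)$, giving rise to a Markov semigroup $P^R_t$ on $B_b(V)$.

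\textbf{Step 2 (gradient estimate for $P^R_t$).} Since $N_R$ is bounded and Lipschitz on $V$, the noise is cylindrical $\alpha$-stable with $\alpha\in(3/2,2)$ and decay $|\beta_k|\asymp \gamma_k^{-\beta}$ with $\beta>\tfrac12+\tfrac1{2\alpha}$ (so $Z_t\in V$ by Lemma \ref{l:ZEst}), the hypotheses of the gradient estimate of \cite[Theorem]{PZ11} (and of \cite{PSXZ11} for the finite-dimensional Galerkin projections, passed to the limit) are verified. This yields constants $C_{R,t}<\infty$ such that
\Be \label{e:GradR}
|\nabla P^R_t f(x)|_V \le C_{R,t}\|f\|_\infty,\qquad f\in B_b(V),\ x\in V,\ t>0,
\Ee
and in particular $P^R_t:B_b(V)\to C_b(V)$ is strong Feller.

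\textbf{Step 3 (removing the truncation).} Define the $V$-exit time $\tau_R(x):=\inf\{t\ge 0:\|X^x_t\|_V>R\}$, using the $D([0,\infty);V)$ version of the solution provided by Theorem \ref{t:MaiThm}(3). By pathwise uniqueness, $X^x_s=X^{R,x}_s$ for $s<\tau_R(x)$, so for every $f\in B_b(V)$,
\Be \label{e:Coup}
|P_t f(x)-P^R_t f(x)|\le 2\|f\|_\infty\,\PP(\tau_R(x)\le t).
\Ee
Given $x_0\in V$ and a sequence $x_n\to x_0$ in $V$ (with $M:=\sup_n\|x_n\|_V<\infty$), the splitting
\Bes
|P_t f(x_n)-P_t f(x_0)|\le |P_t f(x_n)-P^R_t f(x_n)|+|P^R_t f(x_n)-P^R_t f(x_0)|+|P^R_t f(x_0)-P_t f(x_0)|
\Ees
combined with \eqref{e:Coup} and Step 2 will close the argument, provided we can make $\sup_{\|x\|_V\le M}\PP(\tau_R(x)\le t)$ arbitrarily small by choosing $R$ large.

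\textbf{Step 4 (uniform exit estimate -- the main obstacle).} I would prove
\Be \label{e:UnifSup}
\lim_{R\to\infty}\sup_{\|x\|_V\le M}\PP\Bigl(\sup_{0\le s\le t}\|X^x_s\|_V>R\Bigr)=0.
\Ee
For this, write $X^x=Y^x+Z$ with $Y^x$ solving \eqref{e:YEqn}. The $H$ a priori bound \eqref{e:PriEst} yields
\Bes
\sup_{0\le s\le t}\|Y^x_s\|_H^2\le \|x\|_H^2+\int_0^t\bigl(\|Z_s\|_H^2+C\|Z_s\|_V^4\bigr)ds,
\Ees
which by the maximal inequality Lemma \ref{l:ZEst} (with any $p<\alpha$) gives $\E\sup_{s\le t}\|Y^x_s\|_H^p\le C(1+\|x\|_H^p+t^{p/\alpha})$. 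To bootstrap to $V$, insert this into the mild formulation of \eqref{e:YEqn}, use \eqref{e:eAEst} with exponent $\sigma=1/2-\epsilon$, and control $\|N(Y^x_s+Z_s)\|_H$ by \eqref{e:NHEst} via some $A^\sigma$ with $\sigma\in[1/6,1/2)$; a Gronwall-type iteration then yields $\E\sup_{s\le t}\|X^x_s\|_V^p\le \Phi(t,\|x\|_V)$ with $\Phi$ continuous, and Markov's inequality produces \eqref{e:UnifSup}. This bootstrap is the most delicate piece because one must trade integrability against the singularity of the heat kernel at $s=t$, and because the cubic growth in \eqref{e:NHEst} forces one to work with $p<\alpha/3$ (possible precisely because $\alpha>3/2$, which is why this is the range chosen in Theorem \ref{t:Erg}). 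With \eqref{e:UnifSup} in hand, first choosing $R$ large and then $n$ large in the three-term splitting proves $P_t f\in C_b(V)$.
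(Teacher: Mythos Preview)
Your Steps 1--3 (truncate $N$ outside a $V$-ball, apply the \cite{PZ11} gradient estimate to the truncated semigroup, and pass back via a stopping time and the three-term splitting) are exactly the paper's route. Two points, however, need correction.

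\emph{Step 2.} You check only $\beta>\tfrac12+\tfrac1{2\alpha}$, which guarantees $Z\in V$ but is not the hypothesis that drives the gradient estimate. Writing $L_t=\sum_k\tilde\beta_k l_k(t)\tilde e_k$ in the orthonormal basis $\tilde e_k=\gamma_k^{-1/2}e_k$ of $V$, one has $\tilde\beta_k=\gamma_k^{1/2}\beta_k$, and Hypothesis~(5.2) of \cite{PZ11} becomes $|\tilde\beta_k|\ge C\gamma_k^{-(\theta-1/\alpha)}$ for some $\theta\in[1/\alpha,1)$, i.e.\ $\beta<\tfrac32-\tfrac1\alpha$ (this is \eqref{e:ConPZ}--\eqref{e:BetSel} in the paper). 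The compatibility $\tfrac12+\tfrac1{2\alpha}<\tfrac32-\tfrac1\alpha$ is the true source of the restriction $\alpha>3/2$, not the ``$p<\alpha/3$'' mechanism you invoke.

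\emph{Step 4.} Here your plan diverges from the paper and has a real gap. Inserting \eqref{e:PriEst} into the mild formulation and controlling $\|N(Y_s+Z_s)\|_H$ by \eqref{e:NHEst} leads to
\[
\|Y_t\|_V\le \|x\|_V+C\int_0^t(t-s)^{-1/2}\bigl(1+\|A^{1/6}Y_s\|_H^3+K_t^3\bigr)\,ds,
\]
but the cubic term $\|A^{1/6}Y_s\|_H^3\le C\|Y_s\|_V^3$ is \emph{not} dominated by the $H$-bound, so the inequality does not close: a Gronwall lemma with cubic growth yields only a local-in-time estimate, and no bound of the form $\E\sup_{s\le t}\|X^x_s\|_V^p\le\Phi(t,\|x\|_V)$ is available (none is proved anywhere in the paper). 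Your constraint $p<\alpha/3$ addresses the integrability of the $Z$-terms but does nothing for the nonlinear closure.

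The paper sidesteps this entirely. It never takes moments of $\sup_s\|X_s\|_V$; instead it uses the \emph{pathwise} local estimate (ii) of Lemma~\ref{l:LocExUnH}: for $\|x\|_V\le\sqrt\rho$ there is a short time $t_0=t_0(\rho)$ such that on the event $A=\{K_{T_0}\le\rho/2\}$ one has deterministically $\sup_{0\le t\le t_0}\|Y^x_t\|_V\le 1+\|x\|_V<\rho/2$. Hence for $t\le t_0$,
\[
\PP(\tau_x\le t)\le\PP(K_{T_0}>\rho/2)+\PP\Bigl(\sup_{s\le t_0}\|Y^x_s\|_V>\rho/2,\ A\Bigr)=\PP(K_{T_0}>\rho/2)\le C/\rho
\]
by Lemma~\ref{l:ZEst}. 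This gives strong Feller for $t\in(0,t_0]$; the extension to all $t>0$ is then a one-line consequence of the Markov property, $P_tf=P_{t_0}(P_{t-t_0}f)$. So the fix for your Step~4 is to replace the moment bootstrap by this local-time argument plus the tail estimate on the stable convolution.
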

\ \ \ \

\begin{proof} [Proof of Theorem \ref{t:StrFelH}]
Let $T_0>0$ be arbitrary, it suffices to show that for all $t \in (0,T_0]$ and $x \in H$
\Bes
\lim_{\|y-x\|_H \rightarrow 0}P_t f(y)=P_t f(x).
\Ees
Define $\Omega_N:=\{\sup_{0 \le t \le T_0} \|Z(t)\|_{V} \le N\},$
it follows from Lemma \ref{l:ZEst} and Chebyshev's inequality that
\Be \label{e:OmeSma}
\PP(\Omega^c_N) \le c/N,
\Ee
where $c$ is some constants depending on $\alpha$ and $T_0$. \\

Let $x, y \in H$ be arbitrary and let $C>0$ be some constant
depending on $\|x\|_H, \|y\|_H$ and $N$, whose exact values may vary from line to line.

For $\omega \in \Omega_N$, denote by $Y.^x(\omega)$ and $Y.^y(\omega)$ the
solutions to Eq. \eqref{e:YEqn} with initial data $x$ and $y$ respectively. For the
notational simplicity, we shall omit the variable $\omega$ in functions if no confusions arise.

By (i) of Lemma \ref{l:LocExUnH},
there exists some constant $0<t_0 \le 1$, depending on $\|x\|_H, \|y\|_H$ and $N$,
such that for all $0<t\le t_0$
\Be \label{e:A14H}
\|A^{\frac 16} Y^x_t\|_{H} \le C t^{-1/6}, \ \ \ \|A^{\frac 16}Y^y_t\|_{H} \le C t^{-1/6}.
\Ee
Observe that
\Be \label{e:Xx-Xy}
X^x_t-X^y_t=I_1+I_2,
\Ee
where
\Bes
\begin{split}
I_1(t):=e^{-At} x-e^{-At} y, \ \ \ \ \  I_2(t):=\int_0^t e^{-A(t-s)}[N(X^x_s)-N(X^y_s)] ds. \\
\end{split}
\Ees
It follows from \eqref{e:eAEst} that
$$\|I_1(t)\|_V \le \tl c t^{-\frac 12} \|x-y\|_H,$$
where $\tl c$ is some constant.
Using \eqref{e:eAEst}, \eqref{e:NxyHEst} and \eqref{e:PoiInq}, we get
\Bes
\begin{split}
\|I_2(t)\|_V & \le \int_0^t \|A^{1/2}e^{-A(t-s)}\|\|N(X^x_s)-N(X^y_s)\|_H ds \\
& \le C \int_0^t (t-s)^{-\frac 12} (1+\|A^{\frac 16}X^x_s\|_H^2+\|A^{\frac 16}X^y_s\|_H^2)\|X^x_s-X^y_s\|_V ds.
\end{split}
\Ees
By \eqref{e:A14H},
\Bes
\|A^{\frac 16}X^x_s\|_H \le \|A^{\frac 16}Y^x_s\|_H+\|A^{\frac 16}Z_s\|_H \le C(s^{-1/6}+1)
\Ees
Similarly, $\|A^{\frac 16}X^y_s\|_H \le C(s^{-1/6}+1)$. Since $0<s \le t \le t_0 \le 1$, we further get
\Bes
1+\|A^{\frac 16}X^x_s\|_H^2+\|A^{\frac 16}X^y_s\|_H^2\le Cs^{-\frac 13}.
\Ees
Hence,
\Bes
\|I_2(t)\|_V \le  C\int_0^t (t-s)^{-\frac 12} s^{-\frac 13} \|X^x_s-X^y_s\|_V ds.
\Ees
\vskip 3mm

For any $r \in (0,t_0]$, define
$$\Phi_r=\sup_{0 \le t \le r} t^{\frac 12} \|X^x_t-X^y_t\|_V,$$
by (i) of Lemma \ref{l:LocExUnH},
$$\Phi_r \le \sup_{0 \le t \le r} t^{\frac 12} (\|Y^x_t\|_V+\|Y^y_t\|_V)+2r^{\frac 12} N<\infty.$$
It follows from \eqref{e:Xx-Xy} and the bounds of $I_1$, $I_2$ that
\Bes
\begin{split}
\Phi_r & \le \tl c\|x-y\|_H+C\sup_{0 \le t \le r}[t^{\frac 12}\int_0^t (t-s)^{-\frac 12} s^{-\frac 56} ds] \Phi_r \\
& \le \tl c\|x-y\|_H+Cr^{\frac16} \Phi_r.
\end{split}
\Ees
Choose $r$ so small that $Cr^{\frac 16} \le \frac 12$, we get
\Bes
\Phi_r  \le 2\tl c \|x-y\|_H,
\Ees
this immediately implies
\Be \label{e:UFroH}
\|X^x_t-X^y_t\|_V \le 2 \tl c t^{-\frac 12} \|x-y\|_H, \ \ \ \ 0<t \le r.
\Ee
\vskip 3mm

By the Markov property, for all $0<t \le T_0$,
we have
\Bes
\begin{split}
|P_t f(x)-P_t f(y)|=|\E[P_{t-s} f(X^x_s)-P_{t-s} f(X^y_s)]| \le J_1+J_2
\end{split}
\Ees
where $s=\frac t2 \wedge r$ and
$$J_1:=|\E\left\{[P_{t-s} f(X^x_s)-P_{t-s} f(X^y_s)] 1_{\Omega^c_N}\right\}|,$$
$$J_2:=|\E\left\{[P_{t-s} f(X^x_s)-P_{t-s} f(X^y_s)] 1_{\Omega_N}\right\}|.$$
By \eqref{e:OmeSma},
$$J_1 \le 2 c\|f\|_\infty/N.$$
It follows from \eqref{e:UFroH}, Theorem \ref{t:StrFel} and dominated convergence theorem that
$$J_2 \rightarrow 0, \ \ \ \ \  \ \ \ \|x-y\|_H \rightarrow 0.$$
Combining the estimates of $J_1$ and $J_2$, we immediately conclude the proof.
\end{proof}
\ \ \

Let us now discuss the method of proving Theorem \ref{t:StrFel}.
To show the strong Feller property of the semigroup $(P_t)_{t \ge 0}$ on some function space $B_b(W)$,
the noise $(L_t)_{t \ge 0}$ under the norm $\|.\|_{W}$ need to be sufficiently strong to get
a gradient estimate for the OU semigroup corresponding to $(Z_t)_{t \ge 0}$.
If $W=H$, $(L_t)_{t \ge 0}$ is not strong enough. Therefore, we choose
$W=V$ to make the norm of $L_t$ larger.
\vskip 3mm

Because the nonlinearity $N$ is not bounded, we need to use a
well known truncation technique, i.e., considering the equation
with truncated nonlinearity as follows:
\Be \label{e:TruEqn}
dX^{\rho}_t+[A X^{\rho}_t+N^{\rho}(X^{\rho}_t)]dt=d L_t, \ \ \ X^\rho_0=x \in V.
\Ee
where $\rho>0$, $N^{\rho}(x)=N(x) \chi(\frac{\|x\|_V}\rho)$ for all $x \in V$ and
$\chi: \R \rightarrow [0,1]$ is a smooth function such that
$$\chi(z)=1 \ \ \ {\rm for} \ |z| \le 1, \ \ \ \ \chi(z)=0 \ \ \ {\rm for} \ |z| \ge 2.$$
By \eqref{e:NVEst}, for all $x \in V$,
\Be \label{e:BouB}
\|N^{\rho} (x)\|_V \le C (\|x\|_V^3+\|x\|_V) \chi(\frac{\|x\|_V}{\rho}) \le C(\rho^3+\rho).
\Ee
One can easily check that $N^{\rho}$ is a Lipschitz function from $V$ to $V$. Hence,
Eq. \eqref{e:TruEqn} admits a unique solution $X^\rho_{.} \in D([0,\infty);V)$.
For every $f \in B_b(V)$, define
$$P^\rho_t f(x)=\E[f(X^{\rho,x}_t)], \ \ \ \ t \ge 0, \ x \in V,$$
$(P^\rho_t)_{t \ge 0}$ is a Markov semigroup.
\vskip 2mm

To establish the gradient estimate of $(P^\rho_t)_{t \ge 0}$, let us first define the derivative of $f \in C^1_b(V)$: given an $h \in V$,
\Bes
D_h f(x):=\lim_{\e \rightarrow 0} \frac{f(x+\e h)-f(x)}{\e}.
\Ees
By Riesz representation theorem, for every $x \in V$, there exists some $Df(x) \in V$ such that
$$D_h f(x)=\Ll Df(x), h\Rr_V, \ \ \ \ \ \ h \in V.$$
We define
\Be \label{e:DfV}
\|Df\|_\infty=\sup_{x \in V} \|Df(x)\|_V.
\Ee

\vskip 3mm

\begin{prop} \label{p:DPTru}
Let $f \in B_b(V)$.
For all $\alpha \in (3/2,2)$, there exists some $\theta \in [1/\alpha, 1)$ such that
\Be
\|DP^{\rho}_t f\|_\infty \le C t^{-\theta}\|f\|_\infty, \ \ \ \ t>0,
\Ee
where $C>0$ depends on $\rho$, $\alpha$ and $\theta$.
\end{prop}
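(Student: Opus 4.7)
The plan is to invoke the gradient estimate for Ornstein--Uhlenbeck semigroups driven by cylindrical $\alpha$-stable noises established in \cite{PZ11} (or its finite-dimensional version in \cite{PSXZ11}), and transfer it to the truncated semigroup $P^\rho_t$ by a perturbation argument that exploits the bounded Lipschitz structure of $N^\rho$ on $V$. The truncation has been put in exactly so that this transfer can be carried out: from \eqref{e:BouB} one has $\|N^\rho(x)\|_V \le C(\rho^3+\rho)$, and a direct computation using \eqref{e:NVEst} together with the smoothness of the cut-off $\chi$ produces $\|N^\rho(x)-N^\rho(y)\|_V \le C_\rho\|x-y\|_V$, so that $N^\rho:V\to V$ is globally bounded and globally Lipschitz. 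This makes \eqref{e:TruEqn} fall into the class of semi-linear SPDEs with Lipschitz drift treated in \cite{PZ11}.

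The first technical step is to record the OU gradient estimate. For $R_t f(x):=\E[f(e^{-At}x+Z_t)]$ on $B_b(V)$, the lower bound $|\beta_k|\ge C_1\gamma_k^{-\beta}$ with $\beta>\tfrac12+\tfrac1{2\alpha}$ guarantees that $Z_t$ has an ``effective scale'' of order $t^{1/\alpha}$ in each spectral direction, so that the integration-by-parts argument against the one-dimensional stable densities of the components $z_k(t)$ (as in \cite[Thm.~4.4]{PZ11}) yields
\[
\|DR_t f\|_\infty \;\le\; \frac{C}{t^{1/\alpha}}\,\|f\|_\infty,\qquad t>0,
\]
with $D$ measured in the $V$-norm defined by \eqref{e:DfV} and $C$ depending only on $\alpha$ and on the constants $C_1,C_2$ controlling $|\beta_k|$. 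I would also verify that the mild solution map $x\mapsto X^{\rho,x}_t$ is a.s.\ Lipschitz $V\to V$ (a Gr\"onwall argument using the Lipschitz constant of $N^\rho$), which is what lets the gradient be propagated through the nonlinear flow.

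The second step transfers the estimate from $R_t$ to $P^\rho_t$. Writing the mild form
\[
X^{\rho,x}_t \;=\; e^{-At}x \;-\;\int_0^t e^{-A(t-s)}N^\rho(X^{\rho,x}_s)\,ds\;+\;Z_t,
\]
one treats the drift as a perturbation of the OU dynamics and differentiates iteratively. Applying the OU gradient bound along the Picard iterates (or, equivalently, using the Duhamel identity combined with the Bismut-type formula of \cite{PZ11}) produces a weakly singular inequality of the shape
\[
\|DP^\rho_t f\|_\infty \;\le\; \frac{C}{t^{1/\alpha}}\|f\|_\infty \;+\; C_\rho\int_0^t (t-s)^{-1/\alpha}\,\|DP^\rho_s f\|_\infty\,ds,
\]
which is then closed by a generalized Gr\"onwall lemma for weakly singular kernels. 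The resulting bound is $\|DP^\rho_t f\|_\infty\le C t^{-\theta}\|f\|_\infty$ for some $\theta\ge1/\alpha$; the condition $\alpha\in(3/2,2)$ gives $1/\alpha<2/3$, so there is enough room to absorb the kernel iteration and keep $\theta$ strictly below $1$.

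The main obstacle I expect is precisely this transfer step. The gradient bound for $R_t$ is genuinely a \emph{linear} estimate, whereas $P^\rho_t$ couples the initial data with the noise through $N^\rho$; one must carry the $t^{-1/\alpha}$ singularity through a nonlinear Picard scheme without losing integrability, and this is where the constraint $\alpha>3/2$ enters decisively. The boundedness of $N^\rho$ in $V$ and its Lipschitz estimate (which are the point of the truncation) are what keep all constants finite for fixed $\rho$, so that the weakly singular Gr\"onwall inequality can be applied and yields the asserted exponent $\theta\in[1/\alpha,1)$, with the constant $C$ depending explicitly on $\rho$, $\alpha$ and $\theta$.
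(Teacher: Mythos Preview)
Your transfer step (Duhamel identity plus weakly singular Gr\"onwall) matches the paper's approach, which simply cites Lemma~5.9 of \cite{PZ11} after writing
\[
P^{\rho}_t f(x)=R_t f(x)+\int_0^t R_{t-s}\big[\langle N^{\rho}, DP_s^{\rho} f\rangle_V\big](x)\,ds.
\]
That part is fine, and the boundedness/Lipschitz property of $N^\rho$ on $V$ is used exactly as you say.

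The gap is in your OU step. You assert $\|DR_t f\|_\infty\le Ct^{-1/\alpha}\|f\|_\infty$ and say this follows from the lower bound $|\beta_k|\ge C_1\gamma_k^{-\beta}$ with $\beta>\tfrac12+\tfrac1{2\alpha}$. But the exponent $1/\alpha$ is only achieved when the effective noise intensities in the working space are bounded below uniformly. Here the gradient is measured in $V$, so the relevant coefficients are $\tilde\beta_k:=\gamma_k^{1/2}\beta_k$, and the paper's point is precisely to compute these: $|\tilde\beta_k|\ge C\gamma_k^{1/2-\beta}$. Hypothesis~(5.2) of \cite{PZ11} then requires $|\tilde\beta_k|\ge C\gamma_k^{-(\theta-1/\alpha)}$ for some $\theta<1$, which forces
\[
\theta\;\ge\;\beta-\tfrac12+\tfrac1\alpha,
\]
and this is \emph{strictly larger} than $1/\alpha$ because $\beta>\tfrac12$. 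So the OU gradient estimate already carries the larger exponent $\theta$, not $1/\alpha$.

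This also means your explanation of the restriction $\alpha\in(3/2,2)$ is misplaced. It does not arise from integrability in the Gr\"onwall step (for any $\theta<1$ the kernel $(t-s)^{-\theta}$ is integrable); it arises from the compatibility of the two constraints on $\beta$. One needs $\beta>\tfrac12+\tfrac1{2\alpha}$ so that $Z_t\in V$, and one needs $\beta<\tfrac32-\tfrac1\alpha$ so that $\theta<1$ in the OU gradient bound. These two inequalities are simultaneously satisfiable exactly when $\tfrac12+\tfrac1{2\alpha}<\tfrac32-\tfrac1\alpha$, i.e.\ $\alpha>3/2$. Once you insert this computation of $\tilde\beta_k$ and the correct $\theta$, the rest of your argument goes through and coincides with the paper's.
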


\begin{proof}
Observe that $L_t=\sum_{k \in \Z_{*}} \beta_k l_k(t) e_k$ is represented in the space $V$ by
\Bes
L_t=\sum_{k \in \Z_{*}} \tl \beta_k l_k(t) \tl e_k,
\Ees
where $\tl \beta_k=\gamma_k^{1/2} \beta_k$, $\tl e_k=\gamma^{-1/2}_k e_k$
for $k \in \Z_{*}$. $\{\tl e_k\}_{k \in \Z_{*}}$ is an
orthonormal basis of $V$.
Recall the condition in (ii) of Eq. \eqref{e:XEqn}: there exists some $C_1, C_2>0$ such that
\Be \label{e:BeKCon}
C_1 \gamma^{-\beta}_k \le |\beta_k| \le C_2 \gamma^{-\beta}_k,
\Ee
it is easy to check that as
\Bes \label{e:BetSel}
\beta<\frac 32-\frac 1{\alpha},
\Ees
we have
\Be \label{e:ConPZ}
|\tl \beta_k| \ge C\gamma_k^{-(\theta-\frac1{\alpha})},
\Ee
where $\theta \in [1/\alpha,1)$ and $C>0$ is some constant.
Note that \eqref{e:ConPZ} is
Hypothesis (5.2) of \cite{PZ11}, so we get (5.19) of \cite{PZ11}, i.e., there exists some constant
$C>0$ depending on $\alpha$ such that
\Bes
\|D R_t f\|_\infty \le Ct^{-\theta} \|f\|_\infty, \ \ \ \ \ \ f \in B_b(V).
\Ees
where $(R_t)_{t \ge 0}$ is the semigroup corresponding to the OU process $(Z_t)_{t \ge 0}$.
\vskip 3mm

To make \eqref{e:BeKCon} and \eqref{e:ConPZ} be both satisfied, we need
\Be \label{e:BetSel}
\frac 12+\frac 1{2\alpha}<\beta<\frac 32-\frac 1{\alpha}.
\Ee
To make the condition \eqref{e:BetSel} be
satisfied, we need
$$\alpha \in (3/2,2).$$
Recall that $N^\rho$ is a bounded Lipschitz function (see \eqref{e:BouB}), by Lemma 5.9 of \cite{PZ11}, we have
\Bes
P^{\rho}_t f(x)=R_t f(x)+\int_0^t R_{t-s}[\Ll N^{\rho}, DP_s^{\rho} f\Rr_V](x)ds
\Ees
and the desired inequality.
\end{proof}

Define
\Be \label{e:TauX}
\tau_{x}:=\inf\{t>0; \|X^{x}_t\|_V \ge \rho\},
\Ee
by (\ref{3}) of Theorem \ref{t:MaiThm}, $\tau_x$ is a stopping time. For all $t<\tau_x$, Eqs. \eqref{e:XEqn}
and \eqref{e:TruEqn} have the same solutions. Thanks to the following two points: one is the semigroup $(P^\rho_t)_{t \ge 0}$ has a gradient estimate, the other is
the stopping time can be estimated, we can prove the strong Feller property of
$(P_t)_{t \ge 0}$.
\vskip 3mm

\begin{proof} [Proof of Theorem \ref{t:StrFel}]
Without loss of generality, we
assume $\|f\|_\infty=1$. Let $T_0>0$ be arbitrary, it suffices to show that for all $t \in (0,T_0]$ and $x \in V$
\Be \label{e:PtfyxLim}
\lim_{\|y-x\|_V \rightarrow 0}P_t f(y)=P_t f(x).
\Ee

Recall
$$K_{T_0}(\omega):= \sup_{0 \le t \le T_0}\|Z_t(\omega)\|_{V}, \ \ \ \omega \in \Omega,$$
by Lemma \ref{l:ZEst} and Markov inequality we have
\Be \label{e:KOmeSma}
\PP(K_{T_0}>\rho/2) \le  \frac{C}{\rho},
\Ee
where $C$ is some constant depending on $\alpha$ and $T_0$.

Choose $\rho$ so large that $\|x\|_V\le \sqrt \rho$ and define
$$A:=\{K_{T_0}\le \rho/2\},$$
By (ii) of Lemma \ref{l:LocExUnH}, there exists some
$0<t_0 \le T_0$ depending on $\rho$ such that for all
$\omega \in A$,
\Be \label{e:YtOmeSma}
\sup_{0 \le t \le t_0} \|Y^x_t(\omega)\|_V \le 1+\|x\|_V.
\Ee
Observe that
\Be \label{XtOmeSma}
\begin{split}
& \ \ \PP(\sup_{0 \le t \le t_0} \|X^x_t\|_V \ge \rho) \le \PP(\sup_{0 \le t \le t_0} \|Y^x_t\|_V
+\sup_{0 \le t \le T_0} \|Z_t\|_V \ge \rho) \\
& \le \PP(K_{T_0}>\rho/2)+\PP(\sup_{0 \le t \le t_0} \|Y^x_t\|_V>\rho/2, A)
\end{split}
\Ee
By \eqref{e:YtOmeSma}, for all $\omega \in A$, we have
\Bes
\sup_{0 \le t \le t_0} \|Y^x_t(\omega)\|_V \le 1+\|x\|_V  \le 1+\sqrt \rho<\rho/2.
\Ees
So,
\Be  \label{e:YtOme0}
\PP(\sup_{0 \le t \le t_0} \|Y^x_t\|_V>\rho/2, A)=0.
\Ee
Hence,
\Be
\PP(\sup_{0 \le t \le t_0} \|X^x_t\|_V \ge \rho) \le \PP(K_{T_0}>\rho/2)\le C/\rho,
\Ee
where the last inequality is by \eqref{e:KOmeSma}. It follows from the above inequality
that for all $t \in [0,t_0]$
\Be \label{e:TauEst}
\PP_x(\tau_{x} \le t)=\PP(\sup_{0 \le s \le t} \|X^x_s\|_V \ge \rho) \le  C/\rho.
\Ee
Since Eq. \eqref{e:XEqn} and Eq. \eqref{e:TruEqn} both have a unique mild solution, for all $t \in [0, \tau_{x})$,
we have
\Be \label{e:StrWeaUni}
X^{\rho,x}_t=X^x_t \ \ a.s..
\Ee
\ \ \

Let $y \in V$ be such that $\|x-y\|_V \le 1$ and choose $\rho>0$ be sufficiently
large so that $\|x\|_V, \|y\|_V  \le \sqrt{\rho}$.
Let $t \in (0,t_0]$, observe
\Be
\begin{split}
|P_t f(x)-P_t f(y)|=|\E [f(X^x_t)]-\E [f(X^y_t)]|=I_1+I_2+I_3,
\end{split}
\Ee
where
\Bes
\begin{split}
& I_1:=|\E[f(X^x_t)1_{\tau_{x}> t}]-\E[f(X^y_t)1_{\tau_{y} > t}]|, \\
& I_2:=|\E[f(X^x_t)1_{\tau_{x} \le t}]|, \ \ I_3:=|\E[f(X^y_t)1_{\tau_{y} \le t}]|.
\end{split}
\Ees
It follows from \eqref{e:TauEst} that
$$I_2 \le \frac{C}{\rho}, \ \ I_3 \le \frac{C}{\rho}.$$
\ \ \ \

\noindent It remains to estimate $I_1$. It follows from \eqref{e:StrWeaUni}, Proposition
\ref{p:DPTru} and \eqref{e:TauEst} that
\Bes
\begin{split}
I_1& =|\E[f(X^{\rho,x}_t)1_{\tau_{x}>t}]-\E[f(X^{\rho,y}_t)1_{\tau_{y} > t}]| \\
&\le |\E[f(X^{\rho,x}_t)]-\E[f(X^{\rho,y}_t)]|+|\E[f(X^{\rho,x}_t)1_{\tau_{x} \le t}]|+|\E[f(X^{\rho,y}_t)1_{\tau_{y} \le t}]|\\
& \le \tl Ct^{-\theta} \|x-y\|_V+2C/\rho
\end{split}
\Ees
where $\tl C$ depends on $\alpha, \rho$ and $\theta$. For all $\e>0$,
choosing $$\rho \ge \max\{\frac{12C}{\e}, 2\|x\|_V^2+2\}, \ \ \delta<\frac{\e t^{\theta}}{2 \tl C},$$
as $\|x-y\|_V \le \delta$, we have
$$|P_t f(x)-P_t f(y)|<\e, \ \ \  \ \ t \in (0,t_0].$$

As $t_0<t \le T_0$, it follows from Markov property and the strong Feller property above that
\Bes
P_t f(y)-P_t f(x)=P_{t_0} [P_{t-t_0} f](y)-P_{t_0} [P_{t-t_0} f](x) \rightarrow 0
\Ees
as $\|y-x\|_V \rightarrow 0$.
\end{proof}
\ \ \ \ \

\section{Proof of Theorem \ref{t:Erg}}
Here we can not use the classical Doob's Theorem to get the ergodicity because
we are not able to prove the irreducibility. Alternatively, we shall use a simple but useful criterion in \cite{Hai09}.
Let us first introduce the conception of accessibility.

\begin{defn} [Accessibility]
Let $(X_t)_{t \ge 0}$ be a stochastic process valued on a metric space $E$ and let $(P_t(x,.))_{x \in E}$ be the transition
probability family. $(X_t)_{t \ge 0}$ is said to be
accessible to $x_0 \in E$ if the resolvent $\mcl R_{\lambda}$ satisfies
$$\mcl R_{\lambda}(x, \mcl U):=\int_0^\infty e^{-\lambda t}P_t(x, \mcl U) dt>0$$
for all $x \in E$ and all neighborhoods $\mcl U$ of $x_0$, where $\lambda>0$ is arbitrary.
\end{defn}

The simple but useful criterion we shall use is the following theorem
\begin{thm}[Corollary 7.8,\cite{Hai09}] \label{t:SimCri}
If $(X_t)_{t \ge 0}$ is strong Feller at an accessible point $x \in E$, then it can
have at most one invariant measure.
\end{thm}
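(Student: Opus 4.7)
The plan is to invoke the abstract criterion of Theorem~\ref{t:SimCri}. Existence of an invariant measure is already supplied by Theorem~\ref{t:Inv}, and the strong Feller property of $(P_t)_{t\geq 0}$ on $B_b(H)$ by Theorem~\ref{t:StrFelH}. Thus it suffices to exhibit a single accessible point for the Markov process $(X_t)$; the natural candidate is $0\in H$, since both the Laplacian and the sign of the cubic part of $N$ act dissipatively near the origin.

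To prove accessibility to $0$, I will show that for every $x\in H$, every $\varepsilon>0$, and $T>0$ sufficiently large,
\[
\PP(\|X^x_T\|_H\le \varepsilon)>0,
\]
which immediately yields $\mcl R_\lambda(x,B_H(0,\varepsilon))>0$ for every $\lambda>0$. The argument combines two ingredients: (i) the small-noise lower bound of Lemma~\ref{l:Acc}, and (ii) the dissipative a priori estimate \eqref{e:PriEst} for $Y_t=X_t-Z_t$. The hypothesis $\beta>\tfrac12+\tfrac1{2\alpha}$ permits Lemma~\ref{l:Acc} to be applied with $\tilde\theta=\tfrac12$, yielding $\PP(A_{\varepsilon'})>0$ for every $\varepsilon'>0$, where $A_{\varepsilon'}:=\{\sup_{0\le t\le T}\|Z_t\|_V\le\varepsilon'\}$. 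On $A_{\varepsilon'}$, estimate \eqref{e:PriEst} together with the embedding $V\hookrightarrow H$ gives
\[
\|Y_T\|_H^2\le e^{-\mu T}\|x\|_H^2+\frac{C(\varepsilon'^2+\varepsilon'^4)}{\mu},
\]
for the positive spectral constant $\mu=2\pi-3$ appearing in \eqref{e:PriEst}. Choosing $T$ large enough that $e^{-\mu T}\|x\|_H^2\le \varepsilon^2/8$, then $\varepsilon'$ small enough so that the second summand is at most $\varepsilon^2/8$ and simultaneously $\|Z_T\|_H\le C\varepsilon'\le\varepsilon/2$, one concludes $\|X_T\|_H\le \|Y_T\|_H+\|Z_T\|_H\le\varepsilon$ on $A_{\varepsilon'}$, so $P_T(x,B_H(0,\varepsilon))\ge \PP(A_{\varepsilon'})>0$.

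With $0$ accessible and the strong Feller property on $B_b(H)$ in hand, Theorem~\ref{t:SimCri} yields at most one invariant measure; combined with Theorem~\ref{t:Inv} this produces the unique invariant measure claimed. I do not foresee any serious obstacle beyond ordering the choice of constants correctly (first $T$, then $\varepsilon'$) so that the fixed exponential dissipation rate $\mu$ overwhelms both the initial datum and the residual small-noise contribution. The two-sided constraint on $\beta$ and the restriction $\alpha\in(3/2,2)$ enter the proof only through the previously established results: strong Feller via Proposition~\ref{p:DPTru}, and the $V$-regularity of the stochastic convolution $Z$ used in Lemma~\ref{l:Acc}.
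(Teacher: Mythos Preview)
Your proposal does not address the stated theorem. Theorem~\ref{t:SimCri} is an \emph{abstract} criterion, quoted from \cite{Hai09} (Corollary~7.8 there), asserting that for a general Markov process on a metric space $E$, strong Feller at an accessible point implies uniqueness of the invariant measure. The paper does not prove this statement at all; it simply cites it and then applies it (together with a short direct argument in the same spirit) in the proof of Theorem~\ref{t:Erg}. A proof of Theorem~\ref{t:SimCri} would be a general measure-theoretic argument about ergodic decompositions and supports of mutually singular invariant measures, with no mention of $Z_t$, $Y_t$, the Ginzburg--Landau nonlinearity, or the dissipation rate $\mu=2\pi-3$.

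What you have written is instead a proof of Theorem~\ref{t:Erg}: you verify accessibility to $0$ for the specific process $X_t$ via Lemma~\ref{l:Acc} and the a~priori estimate~\eqref{e:PriEst}, and then invoke Theorem~\ref{t:SimCri} as a black box. Read as a proof of Theorem~\ref{t:Erg}, your argument is correct and follows essentially the same route as the paper's own proof of that theorem: the paper also restricts to the event $\{\sup_{0\le s\le t}\|Z_s\|_V\le\varepsilon\}$, applies~\eqref{e:PriEst} to control $\|Y_t\|_H$, chooses $T$ large and $\varepsilon$ small in the same order you describe, and concludes $P_T(x,B_H(\delta))>0$. The only cosmetic difference is that the paper, after establishing accessibility, also spells out a short self-contained uniqueness argument (two ergodic invariant measures would both charge every $B_H(\delta)$, forcing a contradiction via strong Feller at $0$), rather than appealing to Theorem~\ref{t:SimCri} purely as a citation.
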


\begin{proof} [Proof of Theorem \ref{t:Erg}]
For all $\e>0$ and $t>0$, define
$\Omega_{\e,t}=\{\sup_{0 \le s \le t} \|Z_s\|_V \le \e\},$
it follows from Lemma \ref{l:Acc} that
\Bes
\PP(\Omega_{\e,t})>0.
\Ees
Recall \eqref{e:PriEst}, for all $\omega \in \Omega_{\e,t}$ we get
\Bes
\begin{split}
\|Y_t(\omega)\|_H^2 & \le  e^{-(2 \pi-3)t}\|x\|^2_H+\int_0^t e^{-(2 \pi-3)(t-s)} (\|Z_s(\omega)\|^2_H+C\|Z_s(\omega)\|_V^4) ds \\
& \le e^{-(2 \pi-3)t}\|x\|^2_H+C(\e^2+\e^4).
\end{split}
\Ees
For all $r>0$, denote
$$B_H(r):=\{x \in H; \|x\|_H<r\}.$$ For all $R>0$,
it follows from the previous inequality that for all $\delta>0$, we can choose $T:=T_{R,\delta}$ sufficiently large and $\e:=\e_{R,\delta}$ sufficiently
small so that, as $t \ge T$, for all $x \in B_H(R)$ and $\omega \in \Omega_{\e,t}$,
\Bes
\begin{split}
\|X^x_t(\omega)\|_H & \le \|Y^x_t(\omega)\|_H+\|Z_t(\omega)\|_H \\
& \le e^{-(\pi-\frac 32)t} R+C(\e^4+\e^2+\e)<\delta.
\end{split}
\Ees
Since $\PP(\Omega_{\e,t})>0$, we have for all $x \in B_H(R)$
\Be \label{e:Xxt<Del}
P(t;x,B_H(\delta))>0, \ \ \ \ \ \ \ t\ge T.
\Ee
This clearly implies for all $x \in B_H(R)$ and $\lambda>0$,
$$\mcl R_{\lambda}(x,B_H(\delta))>0.$$
Since $R>0$ is arbitrary, the above inequality is true for all
$x \in H$ and thus $(X_t)_{t \ge 0}$ is accessible to $0$.
\\

Of course, we can apply Theorem \ref{t:SimCri} to get
the ergodicity immediately. However, following the spirit in \cite{Hai09}, we can give a clear and short proof
as follows. \\

If $\mu$ is an invariant measure, it follows from Theorem \ref{t:Inv} that $\mu$ is supported on
$V$. Therefore, there exists some (large) $R>0$ such that
\Be \label{e:MuBR>0}
\mu(B_H(R))>0.
\Ee
The inequalities \eqref{e:Xxt<Del} and \eqref{e:MuBR>0} immediately imply
\Be
\mu(B_H(\delta))=\int_H P(T;x,B_H(\delta)) \mu(dx)>0, \ \ \ \ \ \ \ \forall \ \delta>0.
\Ee
Assume that Eq. \eqref{e:XEqn} admits two invariant measures $\mu_1$ and $\mu_2$.
It is well known (\cite{Hai09}) that there are two sets $A_1$ and $A_2$ such that
$\mu_1(A_1)=1$, $\mu_2(A_2)=1$ and $A_1 \cap A_2=\emptyset$. Observe that
\Be \label{e:Mu1}
\int_{H} P(t;x,A_1) \mu_1(dx)=\mu_1(A_1)=1,
\Ee
\Be \label{e:Mu2}
\int_{H} P(t;x,A_1) \mu_2(dx)=\mu_2(A_1)=0.
\Ee
It follows from \eqref{e:MuBR>0} that $\mu_1(B_H(\delta))>0$ for all $\delta>0$. By the strong Feller property (Theorem
\ref{t:StrFelH}) and \eqref{e:Mu1}, we have
$$P(t;0,A_1)=1.$$
On the other hand, by strong Feller property and $\mu_2(B_H(\delta))>0$ again,
\Bes
\int_{H} P(t;x,A_1) \mu_2(dx) \ge \int_{B_H(\delta)} P(t;x,A_1) \mu_2(dx)>0.
\Ees
This is contradictory to \eqref{e:Mu2}. So Eq. \eqref{e:XEqn} admits a unique invariant measure.
\end{proof}

\section{Appendix: Some estimates about the nonlinearity $N$.}
Let us show \eqref{e:NInnPro}-\eqref{e:NHEst}. It follows from Young's inequality that
\Bes
\langle x, -N(x)\rangle_H=\langle x, x-x^3\rangle_H=\int_{\T} |x(\xi)|^2 d\xi-\int_\T |x(\xi)|^4 d\xi \le \int_{\T}\frac 14 d\xi \le \frac 14.
\Ees
By Sobolev embedding theorem and \eqref{e:PoiInq}, we have
\Bes
\begin{split}
\|N(x)\|_V^2 & \le C\int_{\T} |\p_\xi x(\xi)|^2 d\xi+C\int_{\T} |x(\xi)|^4 |\p_\xi x(\xi)|^2 d\xi \\
& \le C \|x\|^2_V+C\|x\|_\infty^4 \|x\|^2_V \\
& \le C \|x\|^2_V+C\|x\|_\infty^4 \|x\|^2_V \\
& \le C \|x\|^2_V+C\|A^{\frac 14} x\|_H^4 \|x\|^2_V \\
& \le C \|x\|^2_V+C\|x\|^6_V.
\end{split}
\Ees
For \eqref{e:NxyHEst}, it follows from Sobolev embedding theorem and Young's inequality that
\Bes
\begin{split}
\|N(x)-N(y)\|_H&=\|x-y\|_H+\|(x-y)(x^2+y^2+xy)\|_H \\
& \le \|x-y\|_H+C(\|x\|^2_{L^6}+\|y\|^2_{L^6}) \|x-y\|_{L^6} \\
& \le C\|A^{\frac 16}(x-y)\|_H+C(\|A^{\frac 16}x\|^2_H+\|A^{\frac 16}y\|^2_H)\|A^{\frac 16}(x-y)\|_H  \\
& \le C(1+\|A^{\frac 16}x\|^2_H+\|A^{\frac 16}y\|^2_H)\|A^{\frac 16}(x-y)\|_H \\
& \le C (1+\|A^{\sigma}x\|^2_H+\|A^{\sigma}y\|^2_H)\|A^{\sigma}(x-y)\|_H,
\end{split}
\Ees
where the last inequality is by \eqref{e:PoiInq}.
Let $y=0$ and apply Young's inequality, we immediately get \eqref{e:NHEst} from \eqref{e:NxyHEst}.

It remains to show \eqref{e:NxyHEst1}. By Sobolev embedding theorem again, we have
\Bes
\begin{split}
\|N(x)-N(y)\|_H&=\|x-y\|_H+\|(x-y)(x^2+y^2+xy)\|_H \\
& \le \|x-y\|_H+C(\|x\|^2_{L^\infty}+\|y\|^2_{L^\infty}) \|x-y\|_{H} \\
& \le C(1+\|A^{\frac 14} x\|_H^2+\|A^{\frac 14} y\|_H^2) \|x-y\|_{H}.
\end{split}
\Ees

\bibliographystyle{amsplain}

\end{document}